\newcommand{\labell}[1] {\label{#1}}
\newcommand{\1}{{{\mathchoice {\rm 1\mskip-4mu l} {\rm 1\mskip-4mu l}
{\rm 1\mskip-4.5mu l} {\rm 1\mskip-5mu l}}}}
\newlength{\facewd} \newlength{\faceht}%
\newcommand{\eye}[1]{%
\settowidth{\facewd}{#1}\settoheight{\faceht}{#1}%
\raisebox{1.1\faceht}{%
\makebox[0pt][l]{%
$\hspace{.3\facewd}\scriptscriptstyle\circ$}}%
#1}%
\newcommand{\ooDe}{{\overset{\scriptscriptstyle\circ}\De}}
\newcommand{\ooE}{{\overset{\scriptscriptstyle\circ}E}}
\newcommand{\oB}{\eye{B}}
\newcommand{\oE}{{\eye{E}}}
\newcommand{\bx} {{\bf x}}
\newcommand{\ve}{{\vareps}}
\renewcommand{\Hat}{\widehat}
\newcommand{\SL}{{\rm SL}}
\newcommand{\less}{{\smallsetminus}}
\newcommand{\un}{\underline}
\newcommand{\al}{{\alpha}}
\newcommand{\be}{{\beta}}
\newcommand{\Om}{{\Omega}}
\newcommand{\om}{{\omega}}
\newcommand{\eps}{{\varepsilon}}
\newcommand{\vareps}{{\epsilon}}
\newcommand{\de}{{\delta}}
\newcommand{\De}{{\Delta}}
\newcommand{\ga}{{\gamma}}
\newcommand{\io}{{\iota}}
\newcommand{\ka}{{\kappa}}
\newcommand{\la}{{\lambda}}
\newcommand{\Cc}{{\mathcal C}}
\newcommand{\Jj}{{\mathcal J}}
\newcommand{\Ff}{{\mathcal F}}
\newcommand{\Ee}{{\mathcal E}}
\newcommand{\ov}{\overline}
\newcommand{\ts}{\textstyle}
\newcommand{\PP}{{\mathbb P}}
\newcommand{\N}{{\mathbb N}}
\newcommand{\Q}{{\mathbb Q}}
\newcommand{\R}{{\mathbb R}}
\newcommand{\C}{{\mathbb C}}
\newcommand{\Z}{{\mathbb Z}}
\newcommand{\CP}{{\mathbb CP}}
\newcommand{\Nn}{{\mathcal N}}
\newcommand{\SSS}{{\smallskip}}
\newcommand{\QED}{{\hfill $\Box$\MS}}
\newcommand{\se} {{\stackrel{s}\hookrightarrow}}
\newtheorem{theorem}{Theorem}[section]
\newtheorem{thm}[theorem]{Theorem}
\newtheorem{cor}[theorem]{Corollary}
\newtheorem{lemma}[theorem]{Lemma}
\newtheorem{prop}[theorem]{Proposition}
\newtheorem{defn}[theorem]{Definition}
\newtheorem{example}[theorem]{Example}
\newtheorem{rmk}[theorem]{Remark}
\numberwithin{figure}{section}
\numberwithin{equation}{section}
\numberwithin{table}{section}
\newcommand{\MS}{{\medskip}}
\newcommand{\NI}{{\noindent}}
\begin{document}
 \title{Symplectic embeddings of $4$-dimensional ellipsoids}
 \author{Dusa McDuff}\thanks{partially supported by NSF grant DMS 0604769.}
\address{Department of Mathematics,
Barnard College, Columbia University, New York, NY 10027-6598, USA.}
\email{dusa@math.columbia.edu}
\keywords{symplectic embedding, symplectic packing problem,
weighted projective space, symplectic ellipsoid, symplectic capacity}
\subjclass[2000]{53D05}
\date{24 March 2008, revised 20 November 2008}
\begin{abstract} We show how to reduce the problem
of symplectically embedding one $4$-dimensional rational ellipsoid into another to a problem of embedding disjoint unions of balls into appropriate blow ups of $\C P^2$.
For example, the problem of embedding the ellipsoid  $E(1,k)$ into a ball $B$ is equivalent to that of  embedding $k$ disjoint equal balls into $\C P^2$, and so can be solved by the work of
Gromov, McDuff--Polterovich and Biran.
(Here $k$  is the ratio of the area of the major axis to that of the minor axis.) 
As a consequence we show that the ball  may be fully 
filled by the ellipsoid $E(1,k)$ for $k=1,4$ and all $k\ge 9$, thus answering a question raised by Hofer.  \end{abstract}
\maketitle

\section{Introduction.}

A $4$-dimensional symplectic ellipsoid is a region in standard Euclidean space $(\R^{4}, \om_0)$ described by an inequality of the form  $Q(z)\le 1$,
where  $Q$ is a positive definite quadratic form and $z\in \R^{4}$.
Since $Q$ may be diagonalized by  a linear change of coordinates,
 every ellipsoid may be written (uniquely) as $E(m,n)$ where
$$
E(m,n): = \Bigl\{\frac {x_1^2+y_1^2}{m} + \frac {x_2^2+y_2^2}{n}\le 1\Bigr\}\;\subset\;\R^4, \quad m\le n.
$$
We denote an open ellipsoid by $\oE$ and the ball $E(m,m)$ by $B(m)$. Further, $\la E$ denotes $E$
with the rescaled form $\la\om_0$.  
Thus $\la E(m,n) : = E(\la m,\la n)$.
Throughout  this paper the word \lq\lq embedding" will be used to denote a symplectic embedding.  If $E$ embeds in $(M,\om)$ we shall write
$E\se (M,\om)$. 

This paper is concerned with the question of when it is possible to embed one $4$-dimensional ellipsoid into another. 
There are two known obstructions to 
 embedding $E(m,n)$ into  $E(m',n')$ when $m,n, m',n'$ are integers; namely, if  such an embedding exists, we must have
\MS

\NI (i) 
 (Volume): $mn\le m'n'$ (since $E(m,n)$ has volume $\pi^2 mn/2$); and\SSS

\NI(ii)  (Ekeland--Hofer capacities): $N(m,n)\le N(m',n')$.
\MS

\NI  Here   $N(m,n)$ denotes the sequence  obtained by arranging
the numbers $km, k\ge 1,$ and $\ell n,\ell \ge 1,$ in nondecreasing order (with repetitions) and $N(m,n)\le N(m',n')$ means that every number in $N(m,n)$ is no larger than the corresponding number in $N(m',n')$; see \cite{CHLS}. In particular, we must have $m\le m'$ as follows from Gromov's nonsqueezing theorem.
For example, 
$$
N(1,4) = (1,2,3,4,4,5,6,7,8,8,9,\dots),\quad 
N(2,2) = (2,2,4,4,6,6,8,8,\dots),
$$
so that $N(1,4)\le N(2,2)$. Since the  volume inequality is also satisfied in this case the question arose as to whether  $E(\la,4\la)$ embeds in the ball  
$B(2)$ for all  $\la<1$.  (By  Corollary~\ref{cor:conn} this is equivalent to  asking if the interior  
$\oE(1,4)$  embeds into $B(2)$.)

When considering this embedding problem it is convenient to 
consider the maximal packing radius
$$
\la_{sup}: =\sup\; \bigl\{ \la \,|\, \la E( m, n)\se E(m',n')\bigr\}
$$
and the packing constant
$$
v: = \la_{sup}^2\, \frac  {mn}{m'n'} \;\le \;1,
$$
which is the ratio of the volume of the domain to that of the target.
Note that both these numbers are scale invariant, i.e. do not change if 
all numbers $m,n,m',n'$ are multiplied by the same constant $\mu$.
We say that $E(m,n)$ {\it fully fills} $E(m',n')$ if $v=1$.
Further we denote by $\C P^2(\mu)$ the complex projective plane with its standard Fubini--Study form, normalized so that the area of a line is $\mu$.
It is obtained from the ball $B(\mu)$ by collapsing 
its boundary sphere to a line.

\begin{thm}\labell{thm:1}  $E(1,k)$ embeds in  the open ball $\oB(\mu)$ if and only if 
$k$ disjoint  balls $B(1)$ embed in $\oB(\mu)$.
\end{thm}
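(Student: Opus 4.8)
The plan is to route both implications through symplectic blow-up, which converts the two embedding problems into a single question about symplectic forms on a blown-up $\CP^2$. By Corollary~\ref{cor:conn} and the matching continuity of the ball packing problem (Gromov, McDuff--Polterovich, Biran) one may pass freely between a closed ellipsoid or ball and its slight enlargements, so it is enough to work with open domains and re-close at the end; I suppress this below. Since $\oB(\mu)$ is $\CP^2(\mu)$ with a line removed and balls can be isotoped off that line, the assertion becomes
$$
\oE(1,k)\ \se\ \CP^2(\mu)\qquad\Longleftrightarrow\qquad k\ \text{disjoint copies of}\ \oB(1)\ \text{embed in}\ \CP^2(\mu).
$$

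For the forward direction I would blow up the target along the embedded ellipsoid. This rests on a construction of the symplectic blow-up of a $4$-manifold $M$ along an embedded $\oE(1,k)$: one excises the ellipsoid and glues in a standard symplectic plumbing of spheres forming a chain, with combinatorics governed by the weight expansion of $E(1,k)$ --- for integer $k$ this is the constant sequence $(1,\dots,1)$ of length $k$, reflecting the resolution of the cyclic quotient singularity of the weighted projective plane $\CP^2(1,1,k)$. The result $\widetilde M$ is smoothly a $k$-fold blow-up of $M$, and the symplectic areas forced on the chain are exactly those of $k$ disjoint standard balls of capacity $1$. The construction is reversible: if $\widetilde M$ carries a symplectic form in which the relevant classes are represented by embedded symplectic spheres of the right areas, then blowing the chain down (one $(-1)$-sphere at a time) recovers $M$ with an embedded $\oE(1,k)$, while blowing down $k$ disjoint $(-1)$-spheres recovers $k$ disjoint copies of $\oB(1)$. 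Feeding this into the McDuff--Polterovich blow-up/blow-down dictionary for $M=\CP^2(\mu)$, both sides become the single question whether $\CP^2(\mu)\#\,k\,\overline{\CP^2}$ admits a symplectic form whose class pairs to $\mu$ with the line and to $1$ with each exceptional class --- which yields the equivalence.

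The main obstacle is the blow-up along the ellipsoid together with its identification with the ordinary $k$-point blow-up: one must show that the chain configuration and $k$ disjoint $(-1)$-spheres carry the same symplectic data, and that both can actually be blown down once the form exists. This is where dimension $4$ is essential --- it uses that a class of square $-1$ pairing positively with the symplectic form is represented by an embedded symplectic sphere, and that finitely many such spheres can be made disjoint and put in standard position (the results of McDuff, Li--Liu and Biran on symplectic spheres in rational surfaces). The combinatorial check that the areas along the chain add up to $k$ copies of $1$, and the open/closed passage via Corollary~\ref{cor:conn} and packing continuity, are then routine. The same scheme gives the general statement, of which Theorem~\ref{thm:1} is the integral case, that $\oE(a,b)$ embeds in a symplectic $4$-manifold if and only if the disjoint balls whose capacities form the weight expansion of $(a,b)$ do.
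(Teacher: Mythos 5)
Your overall scheme is the same as the paper's: excise the ellipsoid, resolve the resulting orbifold point into a chain of spheres, and compare with the $k$-point blow-up of $\C P^2$. The easy direction is actually much simpler than your route suggests (the toric picture shows directly that $E(1,k)$ contains $k$ disjoint balls $B(1)$, so no blow-up is needed there). The problem is the hard direction, where your reduction contains a genuine gap. You claim both problems become the single question of whether $X_k=\C P^2\#\,k\,\overline{\C P}{}^2$ carries a symplectic form whose class $a=\mu\ell-\sum e_i$ pairs to $\mu$ with the line and to $1$ with each $e_i$. For the ball problem this is correct (McDuff--Polterovich). For the ellipsoid problem it is not: the chain realizing the resolved ellipsoid consists of one $(-1)$-sphere in class $E_k$ and $k-1$ spheres of square $-2$ in classes $E_i-E_{i+1}$, and $a(E_i-E_{i+1})=1-1=0$. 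So for a form in the class $a$ itself the chain spheres are forced to have \emph{zero} area and cannot exist; the configuration degenerates exactly at the class you want to work in. One must instead realize the chain for forms in nearby classes $a_\de$ in which the $-2$-spheres have small positive areas, and then let $\de\to 0$, which is why the statement is about the \emph{open} ball and why the paper's proof only produces embeddings of $\la E(1,k)$ for $\la$ below the supremum.

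The second, related gap is that you justify the existence of the chain by appealing to results on embedded symplectic $(-1)$-spheres ("a class of square $-1$ pairing positively with the form is represented by an embedded symplectic sphere, and finitely many such spheres can be made disjoint"). That machinery does not produce the $k-1$ spheres of square $-2$, nor does it produce them intersecting in the prescribed chain pattern; classes with $E^2=-2$, $K\cdot E=0$ are not exceptional classes and need not be represented at all for a given form. The paper's proof supplies exactly the missing mechanism: it first builds the chain $\Hat\Cc_k(\la,\de)$ explicitly by a toric construction for \emph{small} $\la$, disjoint from a line, and then inflates along an embedded $J$-holomorphic curve $Q$ in class $PD(qa)$ (furnished by Taubes--Seiberg--Witten theory from the hypothesis that the $k$ balls embed), chosen $J$-holomorphically compatible with the chain. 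Positivity of intersections and $a(E_i-E_{i+1})=0$ force $Q$ to be disjoint from the $-2$-spheres, so inflation increases the areas of $C_k$ and of the line while freezing the $-2$-spheres, driving $\la$ up to any value below $1$. Without this step (or some substitute for it), your argument does not establish that the chain configuration with the required areas exists, and the blow-down producing $\la E(1,k)\se\oB(\mu)$ cannot be performed.
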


The \lq\lq only if" part of this statement was first observed by Traynor \cite{T}.  It  is very easy to prove using toric models, which make it immediately clear that $E(1,k)$ 
 contains $k$ disjoint open balls $\oB(1)$: see Fig. \ref{fig:5}
and Lemma~\ref{le:open}. However the \lq\lq if" part requires more work. The main idea  
is to cut  the ellipsoid from the ball (i.e. perform an orbifold blow up as in Godinho \cite{God})  and then  to resolve the resulting orbifold 
singularities by further standard blow ups.    The necessary symplectic 
surgery techniques were developed by Symington \cite{Sym} in her treatment  of rational blowdowns.

\begin{cor}\label{cor:1} $E(1,k)$ fully fills $B(\sqrt k)$ if and only if $k=1,4$ or $k\ge 9$.  Moreover for $k\le 8$ the packing constant 
$v(k)$ is:
$$
\begin{array}{|c|c|c|c|c|c|c|c|c|}\hline
k&1&2&3&4&5&6&7&8\\\hline
v(k)&1&\frac 12&\frac 34 &1&\frac 45 &\frac{24}{25} &\frac {63}{64}&\frac {288}{289}
\\\hline
\end{array}
$$
\end{cor}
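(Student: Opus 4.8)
The plan is to convert the full-filling question into the classical symplectic ball-packing problem for $\C P^2$ via Theorem~\ref{thm:1}, and then to read off $v(k)$ from the known packing numbers of Gromov, McDuff--Polterovich and Biran.

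First I would unwind the definitions. Taking $m=1$, $n=k$, $m'=n'=\sqrt k$ in the definition of the packing constant gives $v(k)=\la_{sup}^2$, where $\la_{sup}=\sup\{\la\mid E(\la,\la k)\se B(\sqrt k)\}$. Since symplectic embeddings (and hence $\la_{sup}$) are scale invariant, $E(\la,\la k)\se B(\sqrt k)$ is equivalent to $E(1,k)\se B(\sqrt k/\la)$, so setting $\mu_0:=\inf\{\mu\mid E(1,k)\se \oB(\mu)\}$ one obtains $\la_{sup}=\sqrt k/\mu_0$ and therefore $v(k)=k/\mu_0^2$; the passage between embeddings into the open ball and the \lq\lq $\la<1$'' family of embeddings into the closed ball is exactly what Corollary~\ref{cor:conn} provides. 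Next, Theorem~\ref{thm:1} says that $E(1,k)\se\oB(\mu)$ if and only if $k$ disjoint balls $B(1)$ embed in $\oB(\mu)$; rescaling by $1/\mu$, this holds if and only if $k$ disjoint copies of $B(1/\mu)$ embed in $\oB(1)$, equivalently in $\C P^2(1)$.

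Next I would invoke the solution of the packing problem. Writing $p_k:=\sup\{k a^2\mid k\text{ disjoint copies of }B(a)\text{ embed in }\C P^2(1)\}$ for the $k$-th packing number, the results of Gromov, McDuff--Polterovich and Biran give $p_1=1$, $p_2=\tfrac12$, $p_3=\tfrac34$, $p_4=1$, $p_5=\tfrac45$, $p_6=\tfrac{24}{25}$, $p_7=\tfrac{63}{64}$, $p_8=\tfrac{288}{289}$, and $p_k=1$ for all $k\ge9$. Since shrinking an embedded ball keeps the balls disjointly embedded, $k$ disjoint copies of $B(1/\mu)$ embed in $\C P^2(1)$ precisely for $\mu>\sqrt{k/p_k}$ (this being the threshold that matters for the infimum $\mu_0$), so $\mu_0=\sqrt{k/p_k}$ and hence $v(k)=k/\mu_0^2=p_k$. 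This yields the stated table, and $v(k)=1$ --- i.e. $E(1,k)$ fully fills $B(\sqrt k)$ --- exactly when $p_k=1$, that is, for $k=1$, $k=4$ and all $k\ge9$.

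The genuinely delicate point is not in these identities but in the bookkeeping around open versus closed regions and unattained suprema: Theorem~\ref{thm:1} is phrased for the open target ball, \lq\lq fully fills'' is defined through the supremum $\la_{sup}$, and each $p_k$ is itself a supremum that need not be realized (for instance one cannot pack $k$ equal balls of exactly the critical radius when $p_k=1$). I would handle this by consistently using Corollary~\ref{cor:conn} to move between the family $\{\la E\se E'\mid\la<1\}$ and the single embedding $\oE\se E'$, so that every comparison is made at the level of suprema and no exact embedding is ever asserted. Granting that, the equality $v(k)=p_k$, and with it the corollary, is purely formal; all the substantive content lies in Theorem~\ref{thm:1} (which we are assuming) and in the ball-packing theorems quoted above.
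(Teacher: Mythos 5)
Your proposal is correct and follows essentially the same route as the paper: reduce via Theorem~\ref{thm:1} to the problem of packing $k$ equal balls into $\C P^2$ and then quote the known packing numbers of Gromov, McDuff--Polterovich and Biran, with Corollary~\ref{cor:conn} handling the open/closed and supremum bookkeeping. The paper's own proof is just a terser version of this same reduction.
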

\NI {\it Proof of Corollary.} In his foundational paper \cite{G},
Gromov showed that $v(2)\le 1/2$ and $v(5)\le 4/5$.
When $k\le9$ or $k=d^2$, the rest of the above statement
follows from Theorem \ref{thm:1} by 
McDuff--Polterovich \cite{MP}.
The case $k>9$ is due to
Biran \cite{B}.
\QED
 
For explicit realizations of the ball 
packings at the integers $k<9$, see Karshon \cite{K}, Traynor \cite{T}, Schlenk \cite{Sch0},
and Wieck \cite{W}.

 \begin{rmk}\rm  After this paper was written, I found out 
that Opshtein's paper \cite{Op} on maximal symplectic 
 packings of $\C P^2$ contains a proof that $E(1,k)$ fully 
 fills $\C P^2$ when $k=d^2$. Though not stated 
 explicitly in his paper,
 this result follows immediately from his Lemma 2.1. His argument has 
the virtue of providing a geometric recipe for constructing these packings.  
 \end{rmk}

In \cite[Problem~15]{CHLS}  Cieliebak, Hofer, Latschev, and Schlenk ask whether the two invariants listed above are the only obstructions to embedding one open 
ellipsoid into another. More formally, they asked if
 the Ekeland--Hofer capacities $N(m,n)$ together with the volume capacity $V$ generate the (generalized)  symplectic capacities 
 on the space ${\it Ell}^4$ of open $4$-dimensional ellipsoids.
 To understand what this means, consider the
 capacity given by embeddings into an open ball:
 $$
 c^B(M,\om) : = \inf\bigl\{\mu>0 \,|\, (M,\om)\se \oB(\mu)\bigr\}.
 $$
If this capacity were some combination of $N(m,n)$ and $V$ for $(M,\om) = \oE(m,n)$  then
every time the inequalities given by these capacities are satisfied there would be an embedding of $\oE(m,n)$ into $\oB(\mu)$.
  But this is not so.  For example,
 $c^B(\oE(1,5)) = \frac 52$ because by Corollary~\ref{cor:1} the volume of the target ball must be
 at least $5/4$ times the volume of $\oE(1,5)$.  On the other hand,
 the volume and Ekeland--Hofer capacities give no obstruction to the existence of an embedding of $\oE(1,5)$  into $\la \oB(\sqrt 5)$ for all $\la>1$. We conclude:

 \begin{cor}\label{cor:2}
The Ekeland--Hofer capacities $N(m,n)$ together with the volume capacity do not generate the (generalized)  symplectic capacities on the space ${\it Ell}^4$ of open $4$-dimensional ellipsoids. 
\end{cor}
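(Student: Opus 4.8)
\NI The plan is to exhibit one generalized symplectic capacity on ${\it Ell}^4$ that is \emph{not} generated by $\{N(m,n)\}$ and $V$, namely the embedding capacity $c^B$ itself, whose exact value on $\oE(1,5)$ is already supplied by Corollary~\ref{cor:1}. First I would record that $c^B$, restricted to open $4$-dimensional ellipsoids, really is a generalized symplectic capacity: monotonicity under symplectic embeddings is immediate, since any open ball admitting a symplectic copy of $(M',\om')$ admits one of $(M,\om)$ whenever $(M,\om)\se(M',\om')$; and conformality, $c^B(M,\la\om)=\la\,c^B(M,\om)$ for $\la>0$, follows from $\la\oB(\mu)=\oB(\la\mu)$. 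Hence $c^B|_{{\it Ell}^4}$ is among the capacities that $\{N(m,n)\}\cup\{V\}$ would have to generate; so, under that hypothesis, for any two open ellipsoids $\oE(m,n)$ and $\oE(m',n')$ the conditions $N(m,n)\le N(m',n')$ together with the volume inequality ${\rm vol}\,\oE(m,n)\le{\rm vol}\,\oE(m',n')$ would force $c^B(\oE(m,n))\le c^B(\oE(m',n'))$.

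\NI Next I would pin down $c^B(\oE(1,5))=\tfrac52$. By Corollary~\ref{cor:1} the packing constant is $v(5)=\tfrac45$, so any embedding $\oE(1,5)\se\oB(\mu)$ has ${\rm vol}\,\oB(\mu)\ge\tfrac54\,{\rm vol}\,\oE(1,5)$, i.e.\ $\mu\ge\tfrac52$, while for every $\mu>\tfrac52$ such an embedding exists; hence $c^B(\oE(1,5))=\tfrac52$. On the other hand $c^B(\oB(\mu))=\mu$ (Gromov non-squeezing together with rescaling), so $c^B\bigl(\la\oB(\sqrt5)\bigr)=c^B\bigl(\oB(\la\sqrt5)\bigr)=\la\sqrt5$. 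I would then check that $\{N(m,n)\}$ and $V$ place no obstruction on an embedding $\oE(1,5)\se\la\oB(\sqrt5)$ for $\la\ge1$: the volume inequality reads $5\le5\la^2$, and the Ekeland--Hofer comparison already holds at $\la=1$, since $N(1,5)=(1,2,3,4,5,5,6,7,\dots)$, $N(\sqrt5,\sqrt5)=(\sqrt5,\sqrt5,2\sqrt5,2\sqrt5,\dots)$, and the $j$th term of the first grows like $\tfrac56 j$ whereas that of the second grows like $\tfrac{\sqrt5}{2}j$ (a finite check settles the small values of $j$).

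\NI These two conclusions are incompatible. If $\{N(m,n)\}\cup\{V\}$ generated all generalized symplectic capacities on ${\it Ell}^4$, the implication of the first paragraph applied to $\oE(m,n)=\oE(1,5)$ and $\oE(m',n')=\la\oB(\sqrt5)$ would give $\tfrac52=c^B(\oE(1,5))\le c^B\bigl(\la\oB(\sqrt5)\bigr)=\la\sqrt5$ for every $\la>1$, hence $\tfrac52\le\sqrt5$ on letting $\la\to1$ --- which is false. This contradiction proves Corollary~\ref{cor:2}. (Equivalently, the argument shows the symplectic embedding preorder on ${\it Ell}^4$ is strictly finer than the preorder defined by the invariants $N(m,n)$ and $V$, so no construction built from those invariants can recover all of it.)

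\NI I expect essentially no resistance here: the substance has already been done and is merely being invoked --- it is Theorem~\ref{thm:1} and Corollary~\ref{cor:1} that fix $v(5)=\tfrac45$, hence the non-embeddability of $\oE(1,5)$ into $\oB(\mu)$ for $\mu<\tfrac52$. The only mild points of care are fixing the precise meaning of \lq\lq generates" so as to extract the monotonicity implication used above, and performing the finite termwise verification that $N(1,5)\le N(\sqrt5,\sqrt5)$ (and that the volume condition alone does not obstruct the embedding into $\la\oB(\sqrt5)$); neither is a real obstacle.
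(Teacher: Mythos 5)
Your proposal is correct and follows the same route as the paper: both take the ball-embedding capacity $c^B$, use the packing obstruction $v(5)=\frac45$ from Corollary~\ref{cor:1} to get $c^B(\oE(1,5))=\frac52$, and observe that neither the volume nor the Ekeland--Hofer sequences obstruct $\oE(1,5)\se\la\oB(\sqrt5)$ for $\la>1$, yielding the contradiction $\frac52\le\sqrt5$. Your write-up merely makes explicit the termwise check $N(1,5)\le N(\sqrt5,\sqrt5)$ and the monotonicity bookkeeping that the paper leaves implicit.
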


Note that this follows from
 the easy (only if)  part of Theorem~\ref{thm:1}.

Our second set of results concern the problem of embedding one ellipsoid into another.  It is convenient to introduce the following terminology. Given a positive integer $k$ and $k$ positive numbers $w_1,\dots,w_k$ the {\it (symplectic) packing problem for $k$ balls with weights} $\un w: = (w_1,\dots,w_k)$ is the question of whether the $k$ disjoint (closed) balls $B(w_1),\dots, B(w_k)$ embed into the open ball $\oB(1)$.

\begin{thm}\labell{thm:ell}   For any positive integers $m,n,m',n'$ and any $\la>0$, there is an integer $k$ and weights ${\un w}_\la$ such that  the question of whether $\la E( m, n)$ embeds into the open ellipsoid $\oE(m',n')$ is equivalent to the symplectic packing problem for $k$ balls with weights ${\un w}_\la$.
\end{thm}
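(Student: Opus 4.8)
The plan is to reduce the general ellipsoid-into-ellipsoid embedding problem to the ellipsoid-into-ball case already handled by Theorem~\ref{thm:1}, and then to assemble the resulting ball packings. First I would observe that by scale invariance we may normalize the target, writing $\oE(m',n')$ after rescaling; the real work is to \emph{remove} the target ellipsoid's non-sphericity by blowing it up. Following Godinho \cite{God} and Symington \cite{Sym}, one cuts $\oE(m',n')$ along the ellipsoid itself, performing an orbifold (weighted) blow up and then resolving the resulting cyclic quotient singularities by a chain of ordinary symplectic blow ups. The combinatorics of this resolution are governed by the continued fraction expansion of $n'/m'$: the weighted blow up replacing $\oE(m',n')$ inside a large ball produces a chain of exceptional spheres whose symplectic areas $u_1,\dots,u_\ell$ are read off from that continued fraction, exactly as in the toric picture of Fig.~\ref{fig:5}. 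The upshot is that $\oE(m',n')$ symplectically embeds into nothing — rather, embedding \emph{into} $\oE(m',n')$ becomes, after the surgery, the question of embedding into a ball $\oB(\mu')$ from which $\ell$ disjoint balls of sizes $u_1,\dots,u_\ell$ have been removed.

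The second step applies the same surgery to the domain: the ellipsoid $\la E(m,n)$ that we wish to embed is, by the easy (only if) direction of Theorem~\ref{thm:1} together with the toric models of Lemma~\ref{le:open}, nothing but a nested union of $j$ disjoint balls of sizes $v_1,\dots,v_j$ (again determined by the continued fraction of $n/m$, scaled by $\la$). Combining the two reductions: $\la E(m,n) \se \oE(m',n')$ holds if and only if the $j$ balls $B(v_1),\dots,B(v_j)$ can be embedded disjointly into $\oB(\mu')$ alongside the $\ell$ balls $B(u_1),\dots,B(u_\ell)$ coming from the target's resolution. That is a symplectic packing problem for $k = j+\ell$ balls with weights ${\un w}_\la = (v_1,\dots,v_j,u_1,\dots,u_\ell)$, which is precisely the assertion. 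One must be a little careful about closed versus open balls and about the boundary collapsing $\oB(\mu)\to\C P^2(\mu)$, but Corollary~\ref{cor:conn} (continuity of the embedding capacity in $\la$) lets one pass freely between open and closed domains, so this causes no genuine difficulty.

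The hard part, and the technical heart of the argument, is establishing rigorously that the orbifold blow up followed by resolution is a reversible symplectic operation at the level of embedding problems — i.e. that an embedding of $\la E(m,n)$ into $\oE(m',n')$ \emph{produces} the claimed ball packing of the blown-up target, and conversely that any such packing can be blown \emph{down} to recover an embedding into $\oE(m',n')$. The forward direction uses the fact that a neighborhood of the ellipsoid boundary has a standard model that can be excised and replaced by the resolution chain (Symington's rational-blowdown surgery run in reverse); the backward direction requires knowing that the exceptional chain can be symplectically blown down in the presence of the embedded balls without the balls getting in the way, which is where one needs the weights $u_i$ of the chain and the positions of the packing balls to be compatible. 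Making the continued-fraction bookkeeping precise — so that the weights ${\un w}_\la$ are an explicit function of $(m,n,m',n',\la)$ and the two directions of the equivalence match up — is the main obstacle; once the $E(1,k)$ case of Theorem~\ref{thm:1} is in hand, the general case is \lq\lq only" a matter of iterating that surgery along both continued fractions and checking the areas add up.
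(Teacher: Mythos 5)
Your reduction --- replacing the complement of the target $\oE(m',n')$ in $\C P^2(n')$ by a collection of balls with continued-fraction weights, replacing the domain $\la E(m,n)$ by a second such collection, and combining the two into a single packing problem for $k=N+\Hat N$ balls --- is exactly the architecture of the paper's proof (Theorem~\ref{thm:ell2}, via the classes $V_{m',n'}$ and $\Hat V_{m,n}$), and your forward implication is essentially correct. The genuine gap is in the converse, and you have in fact put your finger on it without resolving it. You propose to take the hypothesized packing of $k$ balls and blow down the exceptional chain \emph{in the presence of the embedded balls}; as you yourself note, this requires the packing balls to sit compatibly with the resolution chains, and there is no reason a given packing does so. The paper never attempts this. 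It uses the geometric packing only through its cohomology class: the class $a_\la=\ell-\sum w_ie_i$ lies in the symplectic cone $\Cc_K(X_k)$, hence by Taubes--Seiberg--Witten theory $PD(qa_\la)$ has nontrivial Gromov invariant for large $q$. One then starts from a trivially small \emph{linear} embedding $rE(m,n)\se\oE(m',n')$ (valid for $r<m'/n$), realizes the two sphere configurations $\Cc_\de\sqcup r\Hat\Cc_\de$ in $(X_k,\Om_{r,\de})$, chooses a generic tame $J$ making them holomorphic, and inflates along an embedded $J$-holomorphic curve in class $PD(qa_\la)$ to drive the parameter from $r$ up to any $\la_0<\la$; only after inflation does one cut along $\Cc_\de$ and blow down $\Hat\Cc_\de$, invoking uniqueness of symplectic forms on star-shaped domains standard near the boundary to identify the result with a subset of $\oE(m',n')$. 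Without this inflation step your converse does not close; \lq\lq checking the areas add up" cannot substitute for it.

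A smaller but real problem: you invoke Corollary~\ref{cor:conn} to pass between open and closed balls, but that corollary is \emph{deduced from} Theorem~\ref{thm:ell}, so using it here is circular. The paper handles the open/closed issue elementarily: an embedding $\la E(m,n)\se\oE(m',n')$ extends to $(1+\ka)\la E(m,n)\se \frac{1}{1+\ka}\oE(m',n')$ for some $\ka>0$, which lets one fatten the open balls of the packing to closed ones; and in the converse direction it suffices to produce embeddings of $\la_0 E(m,n)$ for all $\la_0<\la$, since the space of symplectic forms is open.
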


The following result (which was proved  for balls in \cite{Mcuniq})  is an easy consequence.

\begin{cor}\labell{cor:conn} Let $a,b,a',b'$ be any real positive numbers.
Then:\SSS

\NI
 {\rm (i)} the space of symplectic embeddings of $E(a,b)\se \oE(a',b')$ is path connected whenever it is nonempty;\SSS

\NI {\rm (ii)}  if $\la E(a,b)$ embeds in $\oE(a',b')$ for all $\la<1$, then $\oE(a,b)$ also embeds in $\oE(a',b')$.
\end{cor}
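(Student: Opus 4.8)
\emph{Plan.} I would prove (i) first and then deduce (ii) from it, since the only real content is the connectedness assertion (i); the passage from (i) to (ii) is the standard exhaustion argument used for balls in \cite{Mcuniq}.

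\emph{Part (i), rational ratios.} Suppose first that $a/b,a'/b'\in\Q$. Writing these ratios in lowest terms as $m/n$ and $m'/n'$ and rescaling the whole picture (a harmless operation, since it conjugates the relevant spaces of embeddings by a fixed diffeomorphism), I may assume $\oE(a',b')=\oE(m',n')$ and $E(a,b)=\la E(m,n)$ with $m,n,m',n'\in\Z_{>0}$ and $\la>0$. I then want Theorem~\ref{thm:ell} to identify the space of symplectic embeddings $\la E(m,n)\se\oE(m',n')$ with the space of packings of $\oB(1)$ by $k$ disjoint balls of weights ${\un w}_{\la}$ finely enough that connectedness transfers. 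This is where one has to use the \emph{proof} of Theorem~\ref{thm:ell}, not merely its statement: the construction there (cut the ellipsoid out of the target, i.e.\ perform an orbifold blow up, then resolve the resulting singularities by ordinary blow ups) turns each such embedding into a ball packing, blowing down reverses it, and both operations can be carried out continuously in one-parameter families and are mutually inverse up to isotopy; hence they induce mutually inverse bijections between the path components of the two embedding spaces. Since a space of packings of $\oB(1)$ by disjointly embedded balls of fixed weights is path connected whenever nonempty by \cite{Mcuniq}, so is the space of embeddings $\la E(m,n)\se\oE(m',n')$, which proves (i) when the ratios are rational.

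\emph{Part (i), general ratios.} Given two embeddings $\phi_0,\phi_1:E(a,b)\se\oE(a',b')$, each is the restriction of a symplectic embedding of an open neighbourhood of the compact set $E(a,b)$, so for a closed ellipsoid $E(a_+,b_+)\supset E(a,b)$ with $a_+/b_+\in\Q$ and $a_+,b_+$ as close to $a,b$ as we please it extends to an embedding $\bar\phi_j:E(a_+,b_+)\se\oE(a',b')$. As the images $\bar\phi_j(E(a_+,b_+))$, $j=0,1$, are compact subsets of $\oE(a',b')=\bigcup_{\mu<1}\mu E(a',b')$, they lie in a common open sub-ellipsoid $\oE(c',d')$ with $c'/d'\in\Q$. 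Applying the rational case of (i) to the (nonempty) space of embeddings $E(a_+,b_+)\se\oE(c',d')$ gives a path $\Phi_t:E(a_+,b_+)\se\oE(c',d')\subset\oE(a',b')$ from $\bar\phi_0$ to $\bar\phi_1$, and restricting to $E(a,b)$ produces the required path from $\phi_0$ to $\phi_1$; this establishes (i) in general.

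\emph{Part (ii), and the main obstacle.} Assume $\la E(a,b)\se\oE(a',b')$ for all $\la<1$; fix $\la_n\nearrow1$ and embeddings $\psi_n:\la_n E(a,b)\se\oE(a',b')$. I build inductively a compatible sequence $\chi_n:\la_n E(a,b)\se\oE(a',b')$ with $\chi_{n+1}|_{\la_n E(a,b)}=\chi_n$: put $\chi_1=\psi_1$, and given $\chi_n$ use part (i) to join the embeddings $\chi_n$ and $\psi_{n+1}|_{\la_n E(a,b)}$ of $\la_n E(a,b)$ into $\oE(a',b')$ by a path, realise that path (by the symplectic isotopy extension theorem) through a compactly supported symplectic isotopy $h_t$ of $\oE(a',b')$ with $h_1\circ\psi_{n+1}|_{\la_n E(a,b)}=\chi_n$, and set $\chi_{n+1}=h_1\circ\psi_{n+1}$; then the $\chi_n$ stabilise on every compact subset of $\oE(a,b)=\bigcup_n\la_n E(a,b)$, so $\bigcup_n\chi_n$ is a symplectic embedding $\oE(a,b)\se\oE(a',b')$. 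The step I expect to be the crux is the one in the rational case of (i): upgrading Theorem~\ref{thm:ell} from an equivalence of \emph{solvability} of the two embedding problems to an identification of their spaces of embeddings at the level of path components, which forces one to verify that the blow up and resolution construction and its inverse can be run in families and are inverse up to isotopy — in particular that two embeddings of $\la E(m,n)$ yielding isotopic ball packings are themselves isotopic. By comparison, the exhaustion argument for (ii) and the attendant bookkeeping with the isotopy extension theorem are routine, following \cite{Mcuniq}.
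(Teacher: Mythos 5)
Your reduction to the rational case and your exhaustion argument for (ii) match the paper's, but the core of (i) is left with a genuine gap that the paper's proof is specifically structured to avoid. You propose to transfer connectedness from the space of ball packings to the space of ellipsoid embeddings via the correspondence of Theorem~\ref{thm:ell}, and you correctly identify that this requires the blow-up/resolution construction and its inverse to induce mutually inverse bijections on path components --- in particular that two embeddings $\la E(m,n)\se\oE(m',n')$ whose associated ball packings are isotopic are themselves isotopic. But you do not prove this, and it is not a routine upgrade of Theorem~\ref{thm:ell}: the forward map (ellipsoid embedding $\mapsto$ packing) involves choices of toric balls inside $\la E(m,n)$ and inside the complement of $\oE(m',n')$, the backward map passes through Gromov--Witten existence and inflation, and there is no reason offered why the composite is isotopic to the identity on either side. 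This unproven bijectivity on $\pi_0$ is exactly the content of the corollary, so the argument as written is circular at its crux.

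The paper takes a different and more direct route that bypasses the ball-packing space entirely. It reformulates (i) as the statement that any two deformation-equivalent symplectic forms on $X_k\less \Nn(\Cc_\de\cup\Hat\Cc_\de)$ (the complement of the \emph{union} of the inner chain $\Cc_\de$ for the target and the outer chain $\Hat\Cc_\de$ for the domain) are isotopic; this is the ellipsoid analogue of the standard equivalence between isotopy of ball embeddings and isotopy of blow-up forms from \cite{Mcuniq}. That uniqueness statement is then proved by the deformation-to-isotopy method of \cite{M}: one inflates along an embedded $J$-holomorphic curve $Q$ in class $PD(qa_\la)$ chosen to intersect both $\Cc_\de$ and $\Hat\Cc_\de$ transversally, exactly as in Claim~2 of the proof of Theorem~\ref{thm:ell2}. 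If you want to salvage your approach, you would either have to carry out the family version of the resolution/blow-down construction in detail, or (better) switch to the paper's formulation, where only one moduli space of symplectic forms appears and the isotopy is produced directly by inflation rather than by comparing two correspondences.
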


We also work out two specific examples that answer a
 question raised by Tolman \cite[\S1]{Tol}. 
 
\begin{prop} \labell{prop:tol}{\rm (i)}
The question of whether
$\la E(1, 4)$ embeds in $\oE(2,3)$ is equivalent to the packing problem with $k=7$ and $\un w=\frac 13 (1,1,1,\la,\la,\la,\la)$. Hence the embedding exists iff $\la< \frac 65$.  \SSS

\NI {\rm (ii)}
The question of whether
$\la E(1,5)$ embeds in $\oE(2,3)$ is equivalent to the packing problem with $k=8$ and $\un w=\frac 13(1,1,1,\la,\la,\la,\la,\la)$. Hence the embedding exists iff $\la<\frac{12}{11} $.
\end{prop}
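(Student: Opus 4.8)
The plan is to apply Theorem~\ref{thm:ell} directly to the two cases $(m,n,m',n')=(1,4,2,3)$ and $(1,5,2,3)$, and then invoke the known solution of the relevant ball-packing problems. First I would run the reduction algorithm underlying Theorem~\ref{thm:ell}: cut the ellipsoid $\la E(m,n)$ out of $\oE(m',n')$ by an orbifold blow-up, resolve the singularities by further toric blow-ups, and keep track of the resulting weighted ball-packing data $\un w_\la$. For $\la E(1,4)\se\oE(2,3)$ one rescales so that $\oE(2,3)$ becomes $\tfrac13\oE(2,3)$ sitting inside $\oB(1)$, which accounts for three balls of weight $\tfrac13$ coming from the gap between $E(2,3)$ and $B(3)$, i.e. from $E(2,3)\se B(3)$; the ellipsoid $\la E(1,4)$ then contributes four balls of weight $\tfrac{\la}{3}$ by the toric picture of Figure~\ref{fig:5} (the continued-fraction expansion of $4/1$ has length $4$). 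This gives $k=7$ and $\un w=\tfrac13(1,1,1,\la,\la,\la,\la)$. For $\la E(1,5)\se\oE(2,3)$ the same computation applies except that $5/1$ has a length-$5$ expansion, yielding $k=8$ and $\un w=\tfrac13(1,1,1,\la,\la,\la,\la,\la)$.

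Next I would solve the two packing problems. For (i), I must decide when $B(\tfrac13)^{\times 3}\sqcup B(\tfrac{\la}{3})^{\times 4}$ embeds into $\oB(1)$; scaling by $3$, this asks when $B(1)^{\times 3}\sqcup B(\la)^{\times 4}$ embeds into $\oB(3)$, equivalently into $\C P^2(3)$. The obstructions are the volume inequality $3\cdot 1+4\la^2< 9$, giving $\la<\sqrt{3/2}\approx 1.22$, together with the finitely many exceptional-curve constraints for blow-ups of $\C P^2$ at $7$ points. The binding constraint here is the one coming from the conic through five of the seven points: this is the class $2L-E_1-\cdots-E_5$ in the blow-up, and its positivity against the symplectic form forces $2\cdot 3 - (1+1+1+\la+\la)<0$? — one checks instead which reduction of $\un w$ to a standard form makes the relevant Cremona-reduced class have negative area. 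The sharp bound turns out to be $\la<\tfrac65$, realized by the line-bundle class $2L-E_1-\cdots-E_5$ after permuting so that $E_1,E_2,E_3$ carry the weight-$1$ balls and two of the weight-$\la$ balls are among $E_4,E_5$: then $6 - (1+1+1+\la+\la)\ge 0$ is \emph{not} the operative inequality; rather one uses $3(2L)-\sum$ style Cremona moves until obstruction $\la\le\tfrac65$ emerges. For (ii), the analogous analysis of $B(1)^{\times 3}\sqcup B(\la)^{\times 5}\se\C P^2(3)$ has volume bound $3+5\la^2<9$, i.e. $\la<\sqrt{6/5}\approx 1.095$, and the sharp exceptional-curve obstruction gives $\la<\tfrac{12}{11}$.

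The existence half in each case — that the embedding \emph{does} exist for all $\la$ below the stated threshold — follows from the positivity criterion of McDuff--Polterovich (as used in Corollary~\ref{cor:1}): once every exceptional class and the volume constraint are satisfied strictly, the ball packing exists. Since $\tfrac65$ and $\tfrac{12}{11}$ are the first values of $\la$ at which some exceptional class degenerates, the embeddings exist precisely for $\la$ below these values; by Corollary~\ref{cor:conn}(ii) one can then pass to closed ellipsoids if desired.

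\emph{Main obstacle.} The reduction step is now routine given Theorem~\ref{thm:ell}, so the real work is the combinatorics of identifying, among the infinitely many classes $dL-\sum m_iE_i$ with $d^2-\sum m_i^2=-1$ and $3d-\sum m_i=1$ on the blow-up of $\C P^2$ at $k=7$ or $8$ points, the one that first obstructs as $\la$ grows — i.e. carrying out the Cremona-reduction bookkeeping correctly to confirm the thresholds $\tfrac65$ and $\tfrac{12}{11}$ rather than some larger value coming from a class one has overlooked. For $k\le 8$ the list of exceptional classes is finite and classical (it is governed by the $E_k$ root system), so this is a finite check, but it must be done carefully; this is the step I expect to occupy most of the proof.
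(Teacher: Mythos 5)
Your overall strategy is the same as the paper's: apply Theorem~\ref{thm:ell} (in the precise form of Theorem~\ref{thm:ell2}, using $V_{2,3}=3L-E_1-E_2-E_3$ and $\Hat V_{1,k}=\sum_{j=1}^k\Hat E_j$) to get the stated weights, and then test the class $a_{\un w}=\ell-\frac13 e_{123}-\frac{\la}{3}e_{4\dots(3+k)}$ against the finite set $\Ee_K(X_7)$, resp.\ $\Ee_K(X_8)$, invoking Propositions~\ref{prop:MP} and \ref{prop:M} for sufficiency. The reduction step and the logic of the finite check are both correctly set up.

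The gap is exactly where you flag it: you never actually identify the binding exceptional classes, and the one you do name for (i) is wrong. The conic class $2L-E_{i_1}-\dots-E_{i_5}$ gives at best $2-1-\frac{2\la}{3}>0$ (taking the three weight-$\frac13$ indices and two weight-$\frac{\la}{3}$ indices), i.e.\ $\la<\frac32$, which is weaker than the claimed threshold; your subsequent appeal to unspecified ``Cremona moves'' does not close this. The operative class in (i) is the singular cubic $3L-2E_7-E_{1\dots 6}$ with its double point placed on a weight-$\frac{\la}{3}$ ball: evaluating $a_{\un w}$ gives $3-\frac{2\la}{3}-\bigl(1+\frac{3\la}{3}\bigr)=2-\frac{5\la}{3}>0$, i.e.\ $\la<\frac65$, and one checks the classes $E_i$, $L-E_i-E_j$ and $2L-E_{1\dots5}$ all give weaker bounds. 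In (ii) the binding class is $6L-3E_i-2E_{2\dots 8}$ (type (v) in the classical list for $\Ee_K(X_8)$) with the triple point on a weight-$\frac{\la}{3}$ ball, giving $6-\la-2\bigl(1+\frac{4\la}{3}\bigr)=4-\frac{11\la}{3}>0$, i.e.\ $\la<\frac{12}{11}$; here one must also rule out the types $4L-2E_{123}-E_{4\dots 8}$ and $5L-2E_{1\dots 6}-E_{78}$, which again give weaker constraints. Since for $k\le 8$ the list is finite and explicit, this is a short computation, but without it the thresholds $\frac65$ and $\frac{12}{11}$ are not established.
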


These packing problems arose in Tolman's attempt to describe
 all $6$-dimensional Hamiltonian $S^1$-manifolds $M$ with $H^2(M)$ of rank $1$.  In \cite{Me} we use the ideas of the present paper to 
 construct the two new manifolds among her list of four possibilities, thus completing her classification.

\begin{rmk}\rm (i)  The question of which weights $\un w$ correspond to 
a given packing problem is intimately related to standard (rather than Hirzebruch--Jung) continued fraction expansions: see 
Remarks \ref{rmk:cf} and \ref{rmk:sing}.\MS 

\NI (ii)
Our approach also gives a great deal of information
about the function 
$$
c:[1,\infty)\to [1,\infty),\quad c(a) = \inf\{ \mu: E(1,a)\se B(\mu)\},
$$
studied in Schlenk \cite{Sch} and  in \cite[\S4]{CHLS}.  This will  be the subject of  McDuff--Schlenk \cite{McS}. 
\end{rmk}

\NI {\bf The nature of the ball packing problem.}\,\,
We now explain the results of \cite{MP,M,B,LL} that first convert the 
symplectic packing problem for balls into a question of understanding 
the symplectic cone  of the $k$-fold blow up $X_k$ of 
$\C P^2$, and then explain the structure of that cone.  
The {\it symplectic cone} of an oriented manifold is the set of cohomology classes of $M$ with symplectic representatives compatible with the given orientation.  By Li--Liu \cite{LL},
when $M = X_k$ this is a disjoint union of  
(connected) subcones, each consisting of forms $\om$ with a given canonical class $K = -c_1(M,\om)$.  Moreover,  the group of diffeomorphisms of $X_k$ acts transitively on these subcones.
Fix $K: = -3L +\sum E_i$, where $L: = [\C P^1]$ 
and $E_1,\dots E_k$ are the exceptional divisors, 
and define
$\Cc_K(X_k)\subset H^2(X_k,\R)$  
to be the set of classes represented by symplectic forms
with canonical class $K$. 
Also, denote the Poincar\'e duals of $L,E_i$ by  $\ell, e_i$.

\begin{prop} [McDuff--Polterovich \cite{MP}]\labell{prop:MP}
It is possible to embed $k$ balls with weights $\un w$  
into $\C P^2(1)$  or $\oB^4(1)$ if and only if the  
class $a_{\un w}: = \ell - \sum w_i e_i$ lies in $\Cc_K(X_k)$.
\end{prop}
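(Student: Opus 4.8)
The plan is to prove Proposition~\ref{prop:MP} by relating ball packings to blow-ups in the standard way, following \cite{MP}. The key construction is the symplectic blow-up operation: embedding a ball $B(w)$ into a symplectic $4$-manifold $(M,\om)$ and then removing its interior, collapsing the boundary along the Hopf fibration, produces a new symplectic manifold $\widetilde M$ diffeomorphic to the blow-up of $M$ at a point, carrying a symplectic form whose cohomology class is $[\om] - w\,e$, where $e$ is the Poincar\'e dual of the exceptional divisor. Conversely, given a symplectic form on the blow-up in a class of the form $[\om]-w\,e$ with $w>0$, one can \emph{blow down} to recover $(M,\om)$ together with an embedded ball of capacity $w$; this uses the fact (due to McDuff) that in dimension $4$ the exceptional sphere can be represented symplectically and a neighborhood of it is standard.

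The first step is to set up the correspondence carefully for $k$ disjoint balls: a symplectic embedding $\bigsqcup_i B(w_i) \se \oB(1)$ (equivalently into $\C P^2(1)$, since the complement of a line has a standard ball neighborhood and the interior of a ball is what matters) yields, after performing $k$ successive blow-ups at the $k$ centers, a symplectic form on $X_k$ with class $\ell - \sum_i w_i e_i = a_{\un w}$. Since blowing up does not change $c_1$ away from the obvious contributions, the resulting form has canonical class $K = -3L + \sum E_i$, so $a_{\un w} \in \Cc_K(X_k)$. For the converse, given $\om \in \Cc_K(X_k)$ with $[\om] = a_{\un w}$, one represents each $E_i$ by an embedded symplectic sphere (using positivity of intersections / Gromov theory to show these classes have symplectic, indeed $J$-holomorphic, representatives for suitable $J$, and that they can be made simultaneously disjoint), checks each has self-intersection $-1$ and area $w_i>0$, and then blows down all of them to get $\C P^2(1)$ with $k$ disjoint embedded balls of the right capacities. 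One should also note the elementary point that the weights being positive is exactly what makes $a_{\un w}$ pair positively with each $e_i$, matching the constraint that blow-down requires positive exceptional areas.

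The main obstacle is the converse direction, and specifically two technical points: first, showing that for a generic compatible $J$ the classes $E_i$ are represented by embedded $J$-holomorphic spheres that can be chosen mutually disjoint (this rests on the genus formula, automatic regularity of $-1$-spheres, and Gromov compactness arguments, all from \cite{MP} and the surrounding literature on the structure of symplectic $4$-manifolds); and second, verifying that the blow-down of a symplectically embedded $-1$-sphere of area $w$ really does insert a \emph{standard} ball $B(w)$, so that the procedure is genuinely inverse to blow-up. Both are established in \cite{MP}, so in the write-up I would state the correspondence, cite these facts, and check the bookkeeping of cohomology classes and the canonical class, rather than reprove the hard analytic input. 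The remaining verification—that performing the $k$ blow-ups in either order or simultaneously gives the same class $a_{\un w}$ and the same $K$—is a routine computation in $H^2(X_k)$.
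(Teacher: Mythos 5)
This proposition is quoted from McDuff--Polterovich \cite{MP}; the paper gives no proof of its own, only the citation, so there is nothing internal to compare your argument against. Your outline is the standard blow-up/blow-down correspondence that \cite{MP} actually uses, and it is essentially correct: the forward direction is the cohomology bookkeeping you describe, and the converse rests on the two hard inputs you correctly isolate and cite (simultaneous disjoint embedded symplectic representatives of the $E_i$, and the fact that blowing down a $-1$-sphere of area $w$ yields a standardly embedded ball $B(w)$). The only point you pass over a little quickly is the equivalence of the two targets $\C P^2(1)$ and $\oB(1)$: to get from a packing of $\C P^2(1)$ to one of $\oB(1)$ one must produce a line disjoint from the balls, which in the blown-up picture follows by representing $L$ by a $J$-holomorphic sphere for a $J$ making all the exceptional spheres holomorphic and using positivity of intersections with $L\cdot E_i=0$.
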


Therefore the problem is equivalent  to understanding $\Cc_K(X_k)$.
Though not formulated in these terms, Lemma 1.1 of \cite{M} in essence
describes the closure of $\Cc_K(X_k)$.  (The argument is sketched below.)
That paper solved the uniqueness question for symplectic packings.
In \cite{B,BE},  Biran recast this lemma  in terms of the symplectic cone,
using it to solve the
 existence question for  packings by equal balls.  However the cone itself,
 rather than its closure, was first described in Li--Liu \cite{LL}. 
   
They consider the set 
$\Ee_K(X_k)\subset H_2(X_k;\Z)$  of classes $E$ 
with $K\cdot E = 1, E^2=-1$ that can be represented by smoothly 
embedded $-1$ spheres. By Theorems B and C in Li--Li \cite{LL2}, 
given any symplectic form
$\om$ with canonical class $K$, each $E\in \Ee_K(X_k)$ can be 
represented by an $\om$-symplectically embedded $-1$ sphere.
Thus the above set  $\Ee_K(X_k)$ is the same as that
used in \cite{MP,M,B}.

\begin{prop} [Li-Liu, \cite{LL}] \labell{prop:M}
$$
\Cc_K(X_k) = \{a\in H^2(X_k): a^2>0, \,a(E)>0 \mbox{ for all } 
 E\in \Ee_K(X_k)\}.
$$
\end{prop}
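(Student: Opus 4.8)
\medskip
\noindent{\it Proof strategy.}
Write $P_K$ for the set on the right-hand side, $P_K=\{a\in H^2(X_k;\R):a^2>0,\ a(E)>0\text{ for all }E\in\Ee_K(X_k)\}$. The plan is to prove the two inclusions $\Cc_K\subseteq P_K$ and $P_K\subseteq\Cc_K$ separately. The first is elementary: if $\om$ is symplectic with canonical class $K$ and $[\om]=a$, then $a^2=\int_{X_k}\om\wedge\om>0$, and for each $E\in\Ee_K$ the theorems of Li--Li quoted above give an $\om$-symplectically embedded sphere in class $E$; since $\om$ restricts to an area form on it, $a(E)>0$.

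For $P_K\subseteq\Cc_K$ I would run a connectedness argument: show that $P_K$ is open and connected, that $\Cc_K$ is open and nonempty, and that $\Cc_K$ is \emph{closed in} $P_K$; a nonempty relatively clopen subset of the connected set $P_K$ must then be all of $P_K$. Connectedness of $P_K$ comes from the fact that the intersection form on $H^2(X_k)$ is Lorentzian of signature $(1,k)$: each $E_i$ and (for $k\ge2$) each $L-E_i-E_j$ lies in $\Ee_K$, so $a(E)>0$ for all $E$ forces $a(L)>0$; hence $P_K$ lies in the forward cone $\{a:a^2>0,\ a(L)>0\}$, which is convex, and $P_K$ is the intersection of this cone with the open half-spaces $\{a(E)>0\}$, hence itself convex, hence connected. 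The same Lorentzian geometry shows $P_K$ is open even though $\Ee_K$ may be infinite: for $a$ with $a^2>0$ and $a(L)>0$ the pairing $a(E)$ tends to $+\infty$ along $\Ee_K$, so only finitely many of the conditions $a(E)>0$ are active near any given point. Nonemptiness and openness of $\Cc_K$ are standard: a K\"ahler form on the $k$-fold blow-up with sufficiently small exceptional divisors has canonical class $K$ and lies in $\Cc_K$, while being symplectic is an open condition and the canonical class is locally constant along a path of symplectic forms, so nearby cohomology classes are again realized with canonical class $K$ (stability plus Moser).

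The crux is the relative closedness of $\Cc_K$ in $P_K$: given $a_n=[\om_n]\in\Cc_K$ with $a_n\to a\in P_K$, one must produce a symplectic form with canonical class $K$ in the class $a$ itself — this is precisely the step that upgrades the \emph{closure} of $\Cc_K$ to its interior. Here I would invoke Lemma~1.1 of \cite{M}, which identifies $\overline{\Cc_K}$ with the non-strict analogue $\{a^2\ge0,\ a(E)\ge0\ \forall E\}$ of $P_K$, together with the symplectic inflation technique of \cite{M} and Biran \cite{B,BE}: for generic $\om_n$-compatible $J$ the classes $E\in\Ee_K$, the classes $L-E_i$, and other classes of nonnegative self-intersection built from $L$ and the $E_i$ are represented by embedded $J$-holomorphic curves (by positivity of intersections, the adjunction formula, and the Gromov--Taubes theory available on the rational surface $X_k$), and inflating $\om_n$ along such a curve in class $A$ replaces $[\om_n]$ by $[\om_n]+t\,\mathrm{PD}(A)$ for all $t\ge0$ along a path of symplectic forms, so the canonical class stays $K$. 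Since $a$ satisfies the \emph{strict} inequalities $a^2>0$ and $a(E)>0$ there is room, for $n$ large, to reach $a$ from $a_n$ by finitely many such inflations and a rescaling; alternatively, after rescaling so that $a(L)=1$ and using Proposition~\ref{prop:MP} to rephrase the problem as a ball packing $a=a_{\un w}$, the conclusion follows from the reduction of the weight vector $\un w$ to reduced form and the base-case packings of \cite{MP,B}. (Li--Liu's own argument substitutes for this a wall-crossing computation of Seiberg--Witten invariants on the $b^+=1$ manifold $X_k$ together with Taubes' $\mathrm{SW}=\mathrm{Gr}$ to pass back from a nonvanishing invariant to a symplectic form.) I expect this relative closedness step to be the main obstacle: everything else is formal, whereas verifying that the $J$-holomorphic curves needed for inflation really are present in all the required classes, and that the strict inequalities cutting out $P_K$ leave exactly enough room for these inflations to carry $a_n$ onto $a$, is the technical heart of the statement.
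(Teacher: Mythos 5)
Your easy inclusion and your general architecture (openness of $\Cc_K(X_k)$, convexity of the right-hand set, a clopen argument) are reasonable, and the clopen packaging is a legitimate alternative framing. But there is a genuine gap at exactly the step you identify as the crux. To move $[\om_n]$ to (a multiple of) a nearby class $a$ by inflation you must add $t\,PD(A)$ for classes $A$ represented by embedded $J$-holomorphic curves, and the increment you need, $\mu a-a_n$, points essentially in the direction of $a$ itself. Curves in classes ``built from $L$ and the $E_i$'' of nonnegative self-intersection do not in general span that direction, and finitely many such inflations cannot carry $a_n$ onto an arbitrary $a$ in the positive cone. The engine of the actual proof is precisely the statement you relegate to a parenthesis: by Kronheimer--Mrowka wall crossing on the $b^+=1$ manifold $X_k$, the class $qa$ has nontrivial Seiberg--Witten invariant for every rational $a$ with $a^2>0$ and all large $q$; hence by Taubes $PD(qa)$ has a $J$-holomorphic representative for every tame $J$, which for generic $J$ is connected and embedded once $a(E)\ge 0$ for all $E\in\Ee_K(X_k)$. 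One then inflates along this single curve, starting from any one form in $\Cc_K(X_k)$, and $[\om_t]/(1+t)$ converges to $a$ --- no sequence $\om_n$ and no clopen argument is needed; this is how the paper (following \cite{M,LL}) proceeds. Your ``alternative'' route through Proposition~\ref{prop:MP} is circular: that proposition translates ball packings into membership in $\Cc_K(X_k)$, not conversely, and \cite{MP,B} do not supply packings for arbitrary weight vectors.

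A second, smaller point: the passage from ``$a$ lies in the closure of $\Cc_K(X_k)$'' to ``$a\in\Cc_K(X_k)$'' under the strict inequalities is not a matter of ``having room''; it is the delicate part of Li--Liu's theorem, and the paper explicitly defers it to \cite[\S4]{LL} rather than proving it. Asserting it without argument leaves the proposition unproved even after the inflation step is repaired.
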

\NI {\it Sketch of proof.} 
Let $\om$ be any symplectic form on $X_k$ with canonical class $K$.  
It follows from the work of Kronheimer and Mrowka  
\cite{KM} on wall crossing 
in Seiberg--Witten theory that for all 
$a\in H^2(X_k;\Q)$ with $a^2>0$ 
the class $qa$ has nontrivial Seiberg--Witten invariant for all 
sufficiently large integers $q$. Therefore, by work of Taubes \cite{Tau},
the Poincar\'e dual $PD(qa)$ has a $J$-holomorphic representative for 
every $\om$-tame $J$. Moreover, 
if $a(E)\ge 0$ for all $E\in \Ee$
and $J$ is generic this representative
is a connected and embedded submanifold.  
By inflating along 
this submanifold one can construct 
a family of symplectic forms $\om_t$ with $\om_0=\om$ and 
such that $[\om_t]$ converges to $a$ as $t\to \infty$.
 Therefore, the class  $a$ 
is in the closure of $\Cc_K(X_k)$; see \cite[Lemma~1.1]{M}.
A more careful version of this argument
shows that  $a$ 
 must in fact be  in $\Cc_K(X_k)$ provided that
  $a(E)>0$ for all $E$; see \cite[\S4]{LL}.
\QED

%
%

The arguments in \S2 below show explicitly how to use these ideas to 
construct full packings  by the ellipsoids $E(1,k)$ for $k\in \Nn$ and hence
by equal balls.
 The general existence question is fully 
understood when $k<9$ since in that case $\Ee_K$ is finite and is 
easily enumerated.  However,  it is 
not always so easy to answer specific problems when $k\ge 9$ 
since $\Ee_K$ is more complicated. We shall return 
to this question in\cite{McS}.

\begin{rmk}\rm The above results imply  there are two obstructions 
to the existence of  an embedding from one ellipsoid to another.  
If $a_{\un w}$ is the class in $X_k$ 
of the corresponding packing problem, one needs:\MS

 (i) $a_{\un w}^2>0$, and\SSS
 
  (ii) $a_{\un w}(E)>0$ for all $E\in \Ee_K(X_k)$.\MS
  
\NI
The first condition is equivalent to the volume obstruction, while 
the second is a generalization of the condition used by Gromov \cite{G} 
to find a packing obstruction when $k=2,5$. It is given by 
 the rigid $J$-holomorphic spheres in $X_k$, 
and hence should appear as a genus zero holomorphic trajectory 
in other contexts. However, note that although the classes $E\in \Ee_K$ 
are rigid in the blow up, they correspond in $\C P^2$ to curves 
that satisfy some constraints, e.g. they might have to
 go through a certain number of points with given multiplicities.   
Moreover, in general these constraints may not be purely homological, 
but may involve descendents: cf. the blow down formulas in \cite{HLR}.
Thus, for example, if one tried to understand the obstructions to 
embedding $E(m,n)$ into $E(m',n')$  
by looking at  the
properties of the induced cobordism between their boundaries
 then these curves should appear in one of the higher dimensional 
(but genus zero) SFT moduli spaces and hence should be visible, 
provided that one works in a  context that takes these higher 
dimensional spaces into account. 
 \end{rmk}

\NI {\bf Organization of the paper.} \S2 
considers the problem of embedding ellipsoids of the form $E(1,k)$ 
into balls, where $k\in \N$.  The case $k=d^2$ is particularly simple
and is treated first. Theorem \ref{thm:1} is proved in \S2.2. In order to deal with general integral ellipsoids one must understand exactly how to approximate them by chains of spheres.
This is the subject of \S3.1. Theorem \ref{thm:ell}, 
Corollary~\ref{cor:conn} and Proposition~\ref{prop:tol} 
are proved in \S3.2.\MS

\NI {\bf Acknowedgements.} I was inspired to think about this problem by discussions with Hofer and  Guth, who in \cite{Gu} recently solved (in the negative) Hofer's question about the existence of higher dimensional capacities. I also thank Tolman for giving me an advance copy of her paper \cite{Tol},  Schlenk for his encouragement and useful comments, and the referree for pointing out various small inaccuracies.

\section{Embedding  $E(1,k)$ into a ball.}

We first give a direct geometric construction for embedding $E(1,d^2)$ into a ball.  We then prove Theorem~\ref{thm:1}.

\subsection{The case $k=d^2$.}

This section proves the following result.

\begin{prop}\labell{prop:d}  $E(1,d^2)$ fully fills $B(d)$.
\end{prop}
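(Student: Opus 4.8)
The plan is to show directly, via toric/combinatorial models, that $E(1,d^2)$ contains $d$ disjoint balls of radius $1$ whose total volume fills $B(d)$, and then invoke Theorem~\ref{thm:1} together with Proposition~\ref{prop:MP}. More precisely, recall that $E(1,d^2)$ has volume $\pi^2 d^2/2$, which equals the volume of $B(d) = E(d,d)$. By Theorem~\ref{thm:1}, $E(1,d^2)$ embeds in $\oB(\mu)$ iff $d^2$ disjoint balls $B(1)$ embed in $\oB(\mu)$. So the content of the Proposition is the assertion that $d^2$ disjoint unit balls can be packed into a ball of capacity arbitrarily close to $d$, i.e. that the packing constant for $d^2$ equal balls is $1$. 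That is precisely the $k=d^2$ case of the McDuff--Polterovich theorem (cited in the Corollary after Theorem~\ref{thm:1}), so one route is simply to quote that. However, since the stated aim of \S2.1 is to give a \emph{direct geometric construction}, I would instead produce the packing by hand.

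The key steps, in order: First, I would use the standard toric picture of the ellipsoid $E(1,d^2)$ as (the preimage of) a triangle $\De$ in the moment polytope with vertices at $(0,0)$, $(1,0)$, $(0,d^2)$ — up to the usual rescaling convention, a right triangle with legs $1$ and $d^2$ (equivalently, normalize so $E(1,d^2)$ corresponds to $\{x,y\ge 0,\ d^2 x + y \le d^2\}$, of Euclidean area $d^2/2$). Second, I would subdivide this triangle into $d^2$ smaller triangles each $\mathrm{SL}(2,\Z)$-congruent (or at least $\mathrm{AGL}(2,\Z)$-equivalent) to the standard simplex of area $1/2$, which is the moment triangle of a unit ball $B(1)$; the point is that a long thin triangle of area $d^2/2$ with the right integral edge structure tiles into $d^2$ unimodular triangles, by the elementary fact that a right triangle with legs $1$ and $d^2$ decomposes along the lines $y = j$ and suitable diagonals into $d^2$ standard pieces. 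Third, I would apply Lemma~\ref{le:open} (or the argument behind the easy direction of Theorem~\ref{thm:1}) to conclude that the open balls $\ooE(1)$ corresponding to the interiors of these subtriangles embed disjointly in $\oE(1,d^2)$, filling it up to measure zero. Finally, since $\oE(1,d^2)$ embeds (fully) in $\oB(d)$ by Theorem~\ref{thm:1} applied in the reverse direction — or directly, since the $d^2$ unit balls we have produced can be re-embedded into $\oB(d)$, as $\ell - \sum_{i=1}^{d^2} e_i$ has positive square in $X_{d^2}$ and pairs positively with every class in $\Ee_K(X_{d^2})$ — we get that $E(1,d^2)$ fully fills $B(d)$.

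The main obstacle I expect is the third step: passing from the combinatorial tiling of the moment polytope to an honest \emph{disjoint} symplectic embedding of the open balls. The toric model gives the balls as preimages of open triangles, and these are automatically disjoint in the interior of $E(1,d^2)$; the delicacy is that the moment-map fibers over boundary edges of the subtriangles are lower-dimensional, so one must check that the closed balls $B(1-\eps)$ (for any $\eps>0$) embed disjointly, which is what "fully fills" requires. This is exactly the kind of statement packaged in Lemma~\ref{le:open}, so modulo citing that lemma the argument is clean; the genuinely new geometric input is just the explicit unimodular triangulation of the triangle with legs $1$ and $d^2$, which I would present with a figure. One should also double-check the normalization constants (factors of $\pi$, and whether "radius $1$" means $B(1)$ in the paper's capacity convention) so that the volumes match on the nose — a routine but necessary bookkeeping step.
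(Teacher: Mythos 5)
Your proposal is logically sound, but only via the fallback route you mention first (quote Theorem~\ref{thm:1} plus the McDuff--Polterovich full packing of $\C P^2$ by $d^2$ equal balls); this is legitimate since the proof of Theorem~\ref{thm:1} in \S2.2 does not use Proposition~\ref{prop:d}, and it is exactly how Corollary~\ref{cor:1} is deduced in the introduction. However it is a genuinely different route from the paper's. The point of \S2.1 is to give a construction that does \emph{not} pass through the Seiberg--Witten/Taubes existence theory underlying the hard direction of Theorem~\ref{thm:1}: the paper blows up $\C P^2(d)$ at $d^2$ points iterated along a smooth degree-$d$ curve $Q$ to realize the chain $\Hat\Cc_{d^2}(\la,\de)$ explicitly for \emph{small} $\la$, then inflates along the proper transform $Q_0$ (which has square zero and is an explicit holomorphic curve, so no Gromov/SW existence input is needed) to push $\la$ up to any value below $1$, and finally blows down via Lemma~\ref{le:1}. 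Your route buys brevity at the cost of importing the full machinery proved later; the paper's buys an explicit, self-contained construction in the square case.

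There is also a real confusion in the part of your argument that you present as the ``direct geometric construction.'' Tiling the moment triangle of $E(1,d^2)$ into $d^2$ unimodular triangles produces $d^2$ disjoint balls \emph{inside the ellipsoid}; that is the easy (``only if'') direction of Theorem~\ref{thm:1} and contributes nothing to embedding the ellipsoid \emph{into} $B(d)$, which is what the Proposition asserts. In particular the clause ``or directly, since the $d^2$ unit balls we have produced can be re-embedded into $\oB(d)$'' is a non sequitur: the implication ``the balls contained in $E(1,k)$ embed in the target, therefore $E(1,k)$ embeds in the target'' \emph{is} the hard direction of Theorem~\ref{thm:1}, so nothing is gained by exhibiting the balls. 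A minor scaling point as well: $d^2$ closed balls $B(1)$ cannot embed in $\oB(d)$ (the volumes are equal), so the correct deduction is that for every $\la<1$ the balls $B(\la)$ embed, hence $\la E(1,d^2)\se\oB(d)$, hence $\la_{sup}=1$ and $v=1$; ``fully fills'' requires no more than this.
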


Denote by $\De(m,n)$ the triangle with outward conormals 
$(-1,0), (0,-1), (m,n)$ and vertices at $(0,0), (n,0),(0,m)$; see Figure \ref{fig:1}. 
If $m,n$ are integers then the open ellipsoid 
$\oE(m,n)\subset \C^2$ is invariant under the obvious $T^2$ action and is taken by the moment map onto the \lq\lq interior" 
$\ooDe(m,n)$ of $\De(m,n)$, which we define to be the complement of its slanted edge. 
 The closed triangle $\De(m,n)$ is the moment polytope of a weighted projective space, but we sometimes think of it as
  the moment polytope of $E(m,n)$ since it is the 
  image of $E(m,n)$ under the moment map.  
 
 Recall that if $(m_1,n_1), (m_2,n_2)$ are the outward conormals to
two successive edges (ordered anticlockwise) of a moment polytope
in $\R^2$ then their intersection is the image of an orbifold point of order $k$  iff 
\begin{eqnarray}\labell{eq:tor}
\left|\!\!\begin{array}{cc} m_1&n_1\\m_2&n_2\end{array}\!\!\right| = k.
\end{eqnarray}
 In particular, 
it is smooth iff $k=1$. We shall call a moment polytope 
{\it smooth} if all its vertices are smooth.  (For basic information on toric geometry in the present context see Symington \cite{Sym} and Traynor \cite{T}.)

 \begin{figure}[htbp] 
    \centering
    \includegraphics[width=2in]{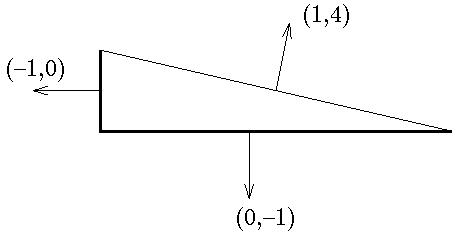} 
    \caption{A toric model of the open ellipsoid $\ooE(1,4)$; it
      does not include the slanted edge.}
    \label{fig:1}
 \end{figure}

\begin{figure}[htbp] 
   \centering
   \includegraphics[width=1.5in]{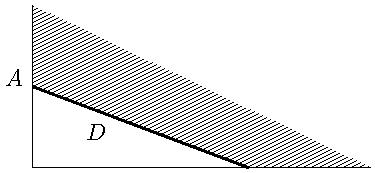} 
   \caption{Toric model of an ellipsoidal blow up}
   \label{fig:2}
\end{figure}

Just as in the case of balls, if $\la E(m,n)$ embeds in $(M,\om)$ then one can cut it out and collapse the boundary along the characteristic flow to obtain
an orbifold $\ov M$ with an exceptional divisor $D$ (the heavy line in Figure \ref{fig:2}).    In general,  the two new vertices
are singular points of $\ov M$.
 However, in the case of $\la E(1, k)$ there is just one singular point on $D$ at $A$.  For more details, see \cite{God}.

The main idea of the current note is that instead of working with the  orbifold $\ov M$ we can cut away more (i.e. blow up further) in order to get a smooth manifold.   
The case $(m,n) = (1,k)$ is particularly simple; we simply need to blow up $k-1$ more times. In the toric picture this amounts to cutting the polytope along lines with conormals
$(-1,-1), (-1,-2), (-1,-3),\dots, (-1, -k)$. The curve $D$ is then
transformed into an exceptional curve in a smooth manifold.
As we see below, it is possible to carry out  this process omitting all mention of orbifolds. We explain it first in the case $k=4$.

\begin{figure}[htbp] 
    \centering
    \includegraphics[width=4in]{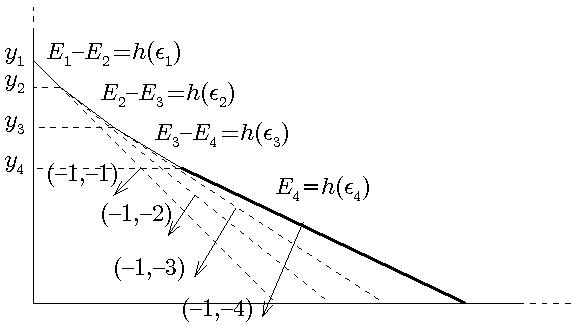} 
    \caption{The three blow ups needed to resolve $A$ in the case $E(1,4)$. The four edges are named $\ve_i$ and their homology classes $h(\ve_i)$.}
    \label{fig:3}
 \end{figure}

When $k=4$, this blow up process gives a configuration of $4$ spheres intersecting transversally,
the $-1$-sphere $C_4$ in class  $E_4$, and three $-2$-spheres $C_i, i=1,2,3,$ each in class $ E_i-E_{i+1}$: see Figure \ref{fig:3}. 
If we start off  in a large ball or in a large $\C P^2$, we can construct such a configuration.  Namely, start with the first quadrant; form $E_1$ by a cut with conormal $(-1,-1)$ and size  
$\la+\de_1$. (The size 
of the cut is given by the affine length\footnote
{
The {\it affine length} $\al(\ve)$ of an edge $\ve$ of a moment polytope can be measured as follows.  Take any  affine transformation $\Phi$ of $\R^2$
that preserves the integer lattice and is such that $\Phi(\ve)$ lies 
along one of the axes, and then measure the Euclidean length 
of $\Phi(\ve)$. Thus if $\ve$ has rational slope and  endpoints on the integer lattice, $\al(\ve) =k+1$ where $k$ is the number of  points 
of the integer lattice in the interior of $\ve$. Note also that the
divisor that is taken to $\ve$  by the moment map has symplectic area $\al(\ve)$. 
}
 of the resulting exceptional divisor, and hence in this case is given by $y_1$, the height of the 
point where the cut meets the $y$ axis.)  Then cut off most of $E_1$
by a cut with conormal $(-1,-2)$ and size $y_2=\la+\de_2< y_1$.   Make one more cut along $E_3$ of size $y_3=\la+\de_3<y_2$.  Then $E_4$ is what is left of $D$. It is easy to check that it has size
 $y_4= 4\la - (\la+\de_1)- (\la+\de_2)- (\la+\de_3) =: \la -\de_4$, where $\de_4=\sum_{i\le3} \de_i$.

Let us denote by $\Nn(\Hat \Cc_4(\la,\de))$  a small open neighborhood
in the toric model
of the final configuration $\Hat \Cc_4(\la,\de)$ of $4$ spheres.  Thus $\Hat \Cc_4(\la,\de)$ is
a configuration
 of $4$ symplectic spheres $C_1,\dots,C_4$ where $C_i$ intersects 
 $C_{i+1}$ transversally for $i=1,2,3$
 (and there are no other intersection points),  where $C_4$ 
has self-intersection $-1$ and size $\la -\de_4$ and where the other
 $C_i$  have self-intersection $-2$ and (positive) 
sizes $\de_1-\de_2, \de_2-\de_3$ and $
\de_3+\de_4$. In the following, we assume that $\de_1>\de_2>\de_3>0$ 
 and that $\de_4 = \de_1+\de_2+\de_3$.
 
 \begin{lemma}\labell{le:nb}
Any embedding of  $\Hat \Cc_4(\la,\de)$ into $(M,\om)$ is isotopic 
to one that extends to an embedding of  $\Nn(\Hat \Cc_4(\la,\de))$.
 \end{lemma}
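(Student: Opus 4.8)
The plan is to show that an embedded configuration of symplectic surfaces meeting according to a fixed intersection pattern always admits a standard symplectic neighborhood determined by the homology classes and areas, and that any two embeddings of the configuration are isotopic after passing to such neighborhoods. The starting point is the symplectic neighborhood theorem: a compact symplectic submanifold $C\subset(M,\om)$ has a neighborhood symplectomorphic to a neighborhood of the zero section in its symplectic normal bundle, and for a $2$-sphere $C$ the normal bundle is determined up to isomorphism by the self-intersection number $C\cdot C$, while the total space carries a symplectic form determined up to scale (near the zero section) by the area $\int_C\om$. So the model neighborhood $\Nn(\Hat\Cc_4(\la,\de))$ is the right local model; the content is that an abstract embedding of the $1$-skeleton $\Hat\Cc_4(\la,\de)$ (four spheres with the prescribed self-intersections, areas, and transverse intersection pattern) can be fattened.

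First I would handle the intersection points. At each transverse intersection $C_i\cap C_{i+1}$ the two surfaces meet $\om$-orthogonally only after a compactly supported isotopy; the relevant fact (Gromov, McDuff; see also the local normal form for a pair of symplectic planes) is that two symplectic surfaces meeting transversally at a point can be isotoped, by a symplectic isotopy supported near that point, so that they are symplectically orthogonal there, and moreover so that in suitable Darboux coordinates they look like the two coordinate symplectic planes. Doing this at each of the three intersection points of $\Hat\Cc_4(\la,\de)$ — the points are isolated so the local modifications do not interfere — reduces us to a configuration that is ``standard near its singularities''. Next, away from the intersection points each $C_i$ is an embedded symplectic sphere, and I invoke the symplectic neighborhood theorem on the complement of small balls around the intersections to get a neighborhood of each $C_i$ modeled on its normal bundle. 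The patching of these normal-bundle neighborhoods with the local Darboux models near the intersection points is the standard plumbing construction; because the gluing maps are symplectic and defined on overlaps that deformation retract onto points or arcs, one can interpolate (using that $\Symp$ of a ball, or of an annulus neighborhood, is connected, or more directly a Moser argument) to obtain a single embedded symplectic neighborhood. This neighborhood is then symplectomorphic to $\Nn(\Hat\Cc_4(\la,\de))$ because the latter was built from the same toric plumbing data (same classes, same sizes), and a Moser-type argument identifies two symplectic forms on the plumbing that agree in cohomology and near the divisor.

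For the isotopy statement, I would run the same construction in a parametrized family. Given two embeddings of $\Hat\Cc_4(\la,\de)$ into $(M,\om)$, connect them first at the level of the surfaces: two symplectically embedded $2$-spheres in the same homology class with the same area are symplectically isotopic, and for a chain one handles the spheres in order, keeping the already-matched part fixed, using the connectedness of the relevant spaces of embedded symplectic spheres and the fact that the intersection points can be matched up. Then the neighborhood-extension construction above, carried out continuously along this isotopy, gives the desired isotopy of embeddings of $\Nn(\Hat\Cc_4(\la,\de))$; alternatively one quotes the uniqueness part of the symplectic neighborhood theorem (two tubular neighborhoods of the same symplectic submanifold are isotopic) together with its plumbed version.

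The main obstacle, and the step requiring genuine care rather than citation, is the gluing/interpolation near the intersection points: one must choose the local $\om$-orthogonalizing isotopies and the fiberwise-symplectic identifications of the normal bundles compatibly, so that the resulting locally defined symplectic charts patch to a single symplectic embedding of the model plumbing — and then run this compatibly in a one-parameter family for the isotopy statement. This is exactly the kind of symplectic surgery bookkeeping carried out by Symington \cite{Sym}, so I would organize the argument to reduce, as cleanly as possible, to her framework: realize $\Nn(\Hat\Cc_4(\la,\de))$ as an explicit toric piece, and show that the orthogonalization plus Moser steps put an arbitrary embedding into the toric normal form.
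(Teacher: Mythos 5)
Your proposal is correct and follows essentially the same route as the paper: perturb the given embedding so that the spheres of $\Hat\Cc_4(\la,\de)$ meet $\om$-orthogonally at their intersection points, then invoke the symplectic neighborhood theorem for such orthogonal configurations (the paper cites \cite[Thm~9.4.7, Ex.~9.4.8]{MS} for exactly this) to extend to $\Nn(\Hat\Cc_4(\la,\de))$. Note that your third paragraph, on connecting two \emph{different} embeddings of the configuration, is not needed for the statement --- the word ``isotopic'' in the lemma refers only to the small orthogonalizing perturbation of the given embedding.
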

\begin{proof} Note that in any smooth
toric manifold the spheres represented by $2$ edges meet orthogonally since we can put them on the axes by an affine transformation.
This may not be the case for the given embedding of 
$\Hat \Cc_4(\la,\de)$.  However, one can slightly perturb this embedding so that the different spheres do meet  orthogonally. 
(A similar point occurs in the proof of \cite[Thm~9.4.7]{MS}. See also
\cite[Ex.~9.4.8]{MS}.) It then follows from the symplectic neighborhood theorem that the embedding extends to $\Nn(\Hat \Cc_4(\la,\de))$.
\end{proof}

  Denote by $X_4(\mu;\la,\de)$ the $4$-point blow up of the projective plane in which the line has symplectic area $\mu$ and we have blown up $4$ times by the amounts
  $\la+\de_i, i = 1,2,3$ and $\la-\sum\de_i$. If $\la <1$ we can choose the $\de_i$ so that each $\la+\de_i < 1$.
    Because $\C P^2$ can be fully filled by $4$ balls of equal size, 
we can  therefore construct $X_4(2;\la,\de)$ for any $\la<1$ 
and sufficiently small
     $\de: = (\de_1,\de_2,\de_3)$ as above.

\begin{lemma}\labell{le:1}
   $\la E(1,4)$ embeds into the interior of $B(2)$ iff
$\Hat \Cc_4(\la,\de)$ embeds into the complement of a line in $X_4(2;\la,\de)$ for some small $\de$.
\end{lemma}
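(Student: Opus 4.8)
The plan is to establish the equivalence in two directions, each by a surgery argument that is the symplectic counterpart of the toric cut-and-blow-up picture described above. For the forward direction, suppose $\la E(1,4)$ embeds into the interior of $B(2)$. Replacing $B(2)$ by $\C P^2(2)$ (collapsing the boundary sphere to a line $\Si$), we obtain an embedding of $\la E(1,4)$ into $\C P^2(2)\less\Si$. Now perform the ellipsoidal blow-up of Godinho: cut out the image of $\la E(1,4)$ and collapse the resulting boundary along its characteristic foliation. This produces an orbifold $\ov M$ with a single singular point of order $4$ lying on an exceptional divisor $D$. The heart of the argument is then to resolve this singularity by three further honest symplectic blow-ups, following Symington's surgery techniques for rational blowdowns; the sizes of the blow-ups must be chosen to match the affine lengths in the toric model, i.e. $\la+\de_1,\la+\de_2,\la+\de_3$ with $\de_1>\de_2>\de_3>0$ small and $\de_4=\de_1+\de_2+\de_3$, so that $D$ is transformed into the configuration $\Hat\Cc_4(\la,\de)$ sitting inside (what is now) $X_4(2;\la,\de)\less\Si$. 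The combinatorics of self-intersections $(-2,-2,-2,-1)$ and of the edge classes $h(\ve_i)$ in Figure~\ref{fig:3} must be checked to confirm that the configuration obtained is exactly $\Hat\Cc_4(\la,\de)$.

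For the reverse direction, suppose $\Hat\Cc_4(\la,\de)$ embeds into $X_4(2;\la,\de)\less\Si$. By Lemma~\ref{le:nb} we may assume, after an isotopy, that the embedding extends to a neighborhood $\Nn(\Hat\Cc_4(\la,\de))$ modelled on the toric picture. Now run the surgery in reverse: blow down the three $-2$-spheres and the $-1$-sphere in the order dictated by the chain structure (or, equivalently, blow down the whole chain to recover the orbifold divisor $D$ and then perform the inverse of the ellipsoidal blow-up). Undoing Godinho's construction glues back in a copy of $\la E(1,4)$, and undoing the four blow-ups that produced $X_4(2;\la,\de)$ from $\C P^2(2)$ returns us to $\C P^2(2)$, hence to $B(2)$ after removing the line $\Si$ (which survived untouched throughout, since all cuts were made away from it). The image of the glued-in ellipsoid lies in the interior of $B(2)$.

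The main obstacle is the resolution step in the forward direction: one must verify carefully that cutting out $\la E(1,4)$ and then performing three successive symplectic blow-ups of the prescribed sizes genuinely produces a smooth configuration symplectomorphic to $\Nn(\Hat\Cc_4(\la,\de))$, with the correct areas on each $C_i$, and that this can all be done inside a region disjoint from $\Si$. This requires knowing that the neighborhood of the orbifold point admits a standard toric model to which Symington's surgeries apply, and that the blow-up parameters (the affine lengths $\al(\ve_i)$) are consistent, i.e. that $y_4=4\la-(\la+\de_1)-(\la+\de_2)-(\la+\de_3)=\la-\de_4>0$; the constraint $\la<1$ is what allows each $\la+\de_i<1$ and hence allows $X_4(2;\la,\de)$ to exist, by the known full packing of $\C P^2$ by four equal balls. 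A secondary technical point, handled by Lemma~\ref{le:nb}, is the passage from a bare symplectic embedding of the configuration of spheres to one that respects the toric neighborhood structure, needed so that the blow-down operations are well-defined.
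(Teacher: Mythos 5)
Your forward direction is essentially the paper's (the paper phrases it without orbifolds: extend the embedding to $(\la+\ka)E(1,4)$, use the resulting standard toric neighborhood, and perform the cuts there, but your route through Godinho's orbifold blow-up followed by resolution is the same idea). The reverse direction, however, has a genuine gap. You write that undoing the surgery ``returns us to $\C P^2(2)$'' because it undoes ``the four blow-ups that produced $X_4(2;\la,\de)$ from $\C P^2(2)$.'' This tacitly assumes that the given embedding of $\Hat\Cc_4(\la,\de)$ coincides with the standard configuration of exceptional divisors used to construct $X_4(2;\la,\de)$. But the lemma must hold for an \emph{arbitrary} embedding of the configuration disjoint from a line --- indeed the embeddings produced later by inflation are not the standard ones --- so after performing Symington's surgery along the given chain you only know that the resulting manifold $(M,\om)$ is diffeomorphic to $\C P^2$, contains a symplectic sphere of area $2$ in the class of the line, and contains a disjoint copy of $\la E(1,4)$. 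The missing step, which is the crux of the paper's proof, is to invoke Gromov's uniqueness theorem for symplectic forms on $\C P^2$ to identify $(M,\om)$ with the standard $\C P^2(2)$ and hence the complement of the line with the standard open ball $\oB(2)$. Without this, you have an embedding of $\la E(1,4)$ into \emph{some} symplectic manifold diffeomorphic to $\C P^2$ minus a line, not into $\oB(2)$.

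A secondary point: your primary description of the surgery as ``blow down the three $-2$-spheres and the $-1$-sphere in the order dictated by the chain structure'' is not the right operation. Symplectic blow-down applies only to $-1$-spheres, and iteratively blowing down $C_4$, then $C_3$ (which becomes a $-1$-sphere), and so on, glues in four disjoint balls --- this recovers the ball packing, not the ellipsoid. The correct operation is the one in your parenthetical and in Symington's paper: a single surgery along the whole nonsmooth divisor that excises $\Nn(\Hat\Cc_4(\la,\de))$ and glues back the singular toric piece containing $\la E(1,4)$. You should make that the main argument, not an aside.
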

\begin{proof}  If $\la E(1,4)$ embeds then this embedding
extends to $(\la+\ka)E(1,4)$ for some small $\ka>0$.  Hence
$\la E(1,4)$ has a standard neighborhood with a toric structure as above.
Complete $B(2)$ to $\C P^2(2)$ and then blow up as explained above to get 
an embedding of $\Hat \Cc_4(\la,\de)$  into the complement of a line in $X_4(2;\la,\de)$. 

Conversely, suppose that $\Hat \Cc_4(\la,\de)$ embeds
into the complement of a line in $X_4(2;\la,\de)$. 
Then, by Lemma \ref{le:nb} we may suppose that some standard neighborhood $\Nn(\Hat \Cc_4(\la,\de))$ also embeds.  By 
Symington's discussion in \cite{Sym} 
of toric models for the rational blow down, 
we may then \lq\lq blow down"
$\Hat \Cc_4(\la,\de)$, i.e. perform a symplectic surgery along this nonsmooth divisor that adds the singular piece that was cut out when resolving the singularity. The resulting blow down manifold $(M,\om)$ contains a symplectically embedded copy of $\C P^1$ of size $2$ and a disjoint copy of 
$\la E(1,4)$.  Moreover, it is diffeomorphic to $\C P^2$.  Therefore, by Gromov's uniqueness result for symplectic forms on $\C P^2$ (cf \cite[Ch.~9]{MS}), 
$(M,\om)$ can be identified with $\C P^2(2)$, so that $M\less \C P^1$ is the interior of the 
standard ball $B(2)$.  This completes the proof.
\end{proof}

\begin{rmk}\label{rmk:rmk1}\rm
There is an obvious analog of this result for any $k$.  An
appropriate definition of $\Hat\Cc_k(\la,\de)$ is explained in the 
proof of Theorem 1.1 at the end of this section.\end{rmk}

Therefore we just need to embed 
$\Hat \Cc_4(\la,\de)$ into the complement of a line in $X_4(2;\la,\de)$.  This is possible for small $\la$.  We then will use the inflation process to increase $\la$. In this case, the construction can be done entirely explicitly: there is  no need to use Seiberg--Witten
theory.\MS

\NI {\bf Proof of Proposition~\ref{prop:d}.}
\MS

\NI {\bf Step 1:} {\it Explicit embedding of $\Hat \Cc_4(\la,\de)$.}\,
Start with $\C P^2(2)$.  Let $Q\subset \C P^2$ be a smooth conic, $L$ a line and $p_1$ a point
on $Q$ but not $L$. Blow up at $p_1$ with size $\la+\de_1$ where $\la$ is small and $\de_1$ is tiny.  Let  $p_2$ be the intersection of
 the exceptional divisor $E_1$ with the proper transform $Q_1$ of $Q$
 and blow up at $p_2$ with size $\la+\de_2$ to get an exceptional divisor $E_2$.   Now repeat this twice more, blowing up at 
 $$
 p_2\in E_2\cap Q_2,\quad Q_2: = \mbox{proper transform of }Q_1
 $$
  by $\la+\de_3$ to get exceptional divisor $E_3$ 
and finally blowing up at 
$$
p_3\in E_3\cap Q_3,\quad Q_3: = \mbox{proper transform of }Q_2
$$
 by $\la -\de_4$ (where $\de_4: = \sum_{i=1}^3\de_i$) to get $C_4$ in the $4$-fold blow up $X_4$. For $i<4$ 
 denote by $C_i$ the proper transform of $E_i$ in the next blow up.
Then
 $[C_i] = E_i-E_{i+1}$ for $i\le 3$ and $[C_4] = E_4$.
Thus we have constructed a copy of the configuration
$\Hat \Cc_4(\la,\de)$ in $X_4(2;\la,\de)$.  Denote the symplectic form
on $X: = X_4(2;\la,\de)$ by $\om_0$.

Note that if $\la$ is sufficiently small  we may assume that none of these blowups affect $L$.  The conic $Q$ becomes
a curve $Q_0$  in class $2L-E_1-E_2-E_3-E_4$ and so has area
$4-4\la$.  By construction $Q_0$ meets $C_4$ once but not $C_i, i<4$.
Moreover, 
$$
\begin{array}{lll}
\int_{C_1}\om_0 = \de_1-\de_{2}, &
\int_{C_2}\om_0 = \de_2-\de_{3}, &
\int_{C_3}\om_0 = \de_3+\de_{4}\\
\int_{C_4}\om_0 = \la-\de_4,&
\int_{Q_0}\om_0 = 4-4\la,&
\int_L\om_0 = 2.
\end{array}
$$
\MS

\NI {\bf Step 2:} {\it The inflation process.} 
 We will inflate $(X,\om_0)$ along $Q_0$.
Note that $Q_0\cdot Q_0 = 0$.  Therefore we may identify  a
neighborhood $\Nn(Q_0)$ with the product  $(Q_0\times D^2,\om_Q\times \al)$ where $\al$ is some area form on $D^2$.
By perturbing  
$C_4$ and the line $L$  and then  shrinking $\Nn(Q_0)$ if necessary, 
we may assume that 
\SSS

$C_i\cap \Nn(Q_0) = \emptyset$ for $i\le 3$;

$C_4\cap \Nn(Q_0)$  is a flat disc $pt\times D^2$;

$L\cap \Nn(Q_0)$  is the union of two disjoint flat discs $pt\times D^2$.
\SSS

\NI
Let $\be$ be a nonnegative form on the two disc $D^2$ with support in its interior and
$\int_{D^2}\be = 1$.
Define $\om_t: = \om_Q\times (\al + t\be)$  in $\Nn(Q_0)$
and equal to the original symplectic form  $\om_0$ outside $\Nn(Q_0)$.
This is clearly symplectic everywhere.
Then the integral of $\om_t$ over $Q_0$ and the $C_i,i\le 3$, is constant while
$$
\int_{C_4} \om_t = \la - \de_4 + t,\quad 
\int_{L} \om_t = 2 + 2t.
$$
Given any $\la_0<1$, 
choose $T$ so that  $\la': =\frac{\la - \de_4 + T}{1+T} \ge \la_0$
and set $\tau: = \om_T/(1+T)$.
Define $\de_i' = \de_i/(1+T)$. By the uniqueness of symplectic forms on
the blow ups of $\C P^2$ (cf. \cite{M}) we may identify
$(X,\tau)$ with $X_4(2;\la',\de')$, and may also identify the configuration
with  $\Hat \Cc_4(\la',\de') $.  Since the configuration is disjoint from a line by construction, we can now deduce  the proposition from Lemma~\ref{le:1}.
\MS

\NI {\bf Step 3:} {\it Completion of the proof.}
This argument extends immediately to the case $d>2$:
simply replace $Q$ in the 
above construction by a degree $d$ curve in $\C P^2(d)$, and blow up $d^2$ times. 
\QED

\begin{rmk}\rm  Because the above argument 
 uses symplectic inflation, it does  not give a completely explicit
 geometric  construction of the embedding.  It is not clear whether such a construction exists. (The construction in Opshtein \cite{Op} gives an embedding into $\C P^2$.)
 The best results obtained so far by explicit construction are those of Schlenk, who used a multiple symplectic folding technique to show that $E(1,4)$ embeds in $B(\mu)$ for $\mu$ approximately equal to $2.7$, see ~\cite[Ch.~3.3]{Sch}.
\end{rmk}

\subsection{The general case.}

We now prove Theorem~\ref{thm:1}. 

\begin{figure}[htbp] 
   \centering
   \includegraphics[width=2.5in]{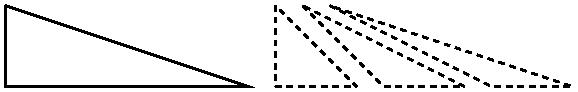} 
   \caption{Three copies of $\overset{\scriptscriptstyle\circ}B(1)$ embed in $E(1,3)$.}
   \label{fig:5}
\end{figure}

\begin{lemma}\labell{le:open}  $E(1,k)$ contains the disjoint union of $k$ open balls $\oB(1)$.
\end{lemma}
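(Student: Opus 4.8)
The plan is to use the standard Hamiltonian $T^2$-action on $\C^2$. After normalising the moment map $\mu$ so that $E(m,n)$ is carried onto the triangle $\De(m,n)$ of the text, one has $\ooE(1,k)=\mu^{-1}(\ooDe(1,k))$, where $\ooDe(1,k)$ is $\De(1,k)=\mathrm{conv}\{(0,0),(k,0),(0,1)\}$ with its slanted edge deleted, and where the open ball $\oB(r)$ is exactly the preimage of the half-open size-$r$ standard simplex $\{(x,y):x\ge0,\ y\ge0,\ x+y<r\}$. The whole idea is that a triangle --- unlike a ball or a square --- tiles \emph{perfectly} by unimodular sub-triangles, so $\ooDe(1,k)$ breaks into $k$ pieces each shaped like a unit simplex, and one reads these off as $k$ disjoint unit balls.

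Concretely, I would tile $\De(1,k)$ by the $k$ sub-triangles $T_j:=\mathrm{conv}\{(j,0),(j{+}1,0),(0,1)\}$, $j=0,\dots,k-1$; this is the picture in Figure~\ref{fig:5}. A one-line determinant computation shows each $T_j$ is a smooth lattice triangle of area $\tfrac12$, hence $\mathrm{AGL}(2,\Z)$-equivalent to the standard $1$-simplex $\De(1,1)$, and the $T_j$ have pairwise disjoint interiors with union $\De(1,k)$. Fixing $j$ and choosing $\psi_j\in\mathrm{AGL}(2,\Z)$ with $\psi_j(\mathrm{int}\,T_j)=\mathrm{int}\,\De(1,1)$, one lifts $\psi_j$ (which moves the open first quadrant to the open first quadrant) to an equivariant symplectomorphism of torus bundles, identifying $\mu^{-1}(\mathrm{int}\,T_j)$ with $\mu^{-1}(\mathrm{int}\,\De(1,1))=\oB(1)\setminus\{z_1z_2=0\}$. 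This is $\oB(1)$ with a pair of codimension-two coordinate discs removed; by a symplectic folding/squeezing argument --- essentially Traynor's observation --- it still contains $\oB(r)$ for every $r<1$. Hence each $\mu^{-1}(T_j\cap\ooDe(1,k))$ contains a copy of $\oB(r)$, and since the tiles have disjoint interiors these $k$ balls are pairwise disjoint and lie in $\ooE(1,k)\subset E(1,k)$. (The corner tile $T_0$, whose two short edges lie on the coordinate axes, in fact contributes an honest $\oB(1)$, but this is not needed.)

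This produces, for every $r<1$, a packing of $E(1,k)$ by $k$ disjoint copies of $\oB(r)$. The remaining --- and, I expect, only genuinely delicate --- point is to push $r$ up to $1$: one wants the folding embeddings to fit together into an increasing family as $r\uparrow1$, so that the union is a symplectically embedded $\oB(1)$, or, more structurally, one invokes the stability of ball-packing conditions under limits, in the spirit of Corollary~\ref{cor:conn}(ii) and the results of \cite{Mcuniq}. With that in hand the $k$ copies of $\oB(r)$ become $k$ disjoint copies of $\oB(1)$, completing the argument; the combinatorics of the tiling itself, as Figure~\ref{fig:5} makes clear, is immediate.
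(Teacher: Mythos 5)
Your tiling step is exactly the paper's argument: decompose $\De(1,k)$ into $k$ lattice triangles that are $\mathrm{AGL}(2,\Z)$-equivalent to the standard simplex $\breve\De$, and observe that the moment preimage of the interior of each tile is symplectomorphic to $U=\{(z_1,z_2): z_i\neq 0,\ |z_1|^2+|z_2|^2<1\}$, i.e.\ $\oB(1)$ with the two coordinate discs removed. Where you diverge is the last step, and there you have a genuine gap. You only claim that $U$ contains $\oB(r)$ for every $r<1$ and then try to recover $\oB(1)$ by a limiting argument. The paper instead invokes Traynor's theorem in its full strength: the explicit construction in \cite{T} embeds the \emph{entire} open ball $\oB(1)$ into $U$ in one shot. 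That stronger statement is the whole analytic content of Lemma~\ref{le:open} beyond the (trivial) combinatorics of the tiling, and once you have it there is nothing left to do.

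Your proposed patch does not close the gap. Invoking Corollary~\ref{cor:conn}(ii) is circular within the logic of the paper: that corollary is deduced from Theorem~\ref{thm:ell}, whose proof uses Corollary~\ref{cor:V}, which in turn is proved ``as in Lemma~\ref{le:open}''; and the other direction of Theorem~\ref{thm:1} itself rests on this lemma. Independently of circularity, the limit you need is an increasing exhaustion of $\oB(1)$ by the images of a family of embeddings $\oB(r)\se U$ as $r\uparrow 1$ (or, globally, $k$ simultaneously disjoint such families in $E(1,k)$); arranging the images to be nested requires a connectivity-of-embeddings statement for the non-compact target $U$ (respectively for configurations of $k$ disjoint balls), which is not what \cite{Mcuniq} provides -- that paper treats balls in closed manifolds via blowing up, and the packing version for this target is not off the shelf. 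So either quote Traynor's result as actually stated (which is what the paper does), or supply the explicit folding construction of $\oB(1)\se U$ yourself; the ``push $r$ to $1$ by abstract stability'' route as written does not work.
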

\begin{proof}  Denote by $\breve\De\subset \R^2$  the open triangle
with vertices $(0,0),  (1,0) $ and $(0,1)$. 
As is clear from Figure \ref{fig:5}, there are integral affine transformations $A_1=id, A_2,\dots, A_k$ of $\R^2$ such that 
the 
$k$ triangles $A_1(\breve\De),\dots,A_k(\breve\De)$ embed in the moment polytope of $E(1,k)$.  
On the other hand
$\breve\De$ is the moment polytope of the open subset 
$$
U = 
\{(z_1,z_2)\in \C^2\,|\, z_i\ne 0, |z_1^2|+|z_2|^2<1\}\subset B(1),
$$
which contains an embedded image of the open ball $\oB(1)$  by Traynor \cite{T}.
\end{proof}

\NI {\bf Proof of Theorem~\ref{thm:1}.} If $E(1,k)$ embeds in $\oB(\mu)$ then $(1+\ka) E(1,k)$ also embeds for some $\ka>0$.  
But Lemma~\ref{le:open} implies that $(1+\ka) E(1,k)$  contains $k$ disjoint closed balls $B(1)$.  Hence so does $\oB(\mu)$.  

Conversely, if $k$ copies of $B(1)$ embed in $\oB(\mu)$, choose   $\mu_0<\mu$ so that their image is contained in $\oB(\mu_0)$.
By Proposition~\ref{prop:MP}
the class $a: = \mu_0\ell - \sum_{i=1}^k e_i\in H^2(X_k)$ has a  symplectic representative with canonical class $K$ and so lies in $\Cc_K(X_k)$.  Hence, as in Proposition~\ref{prop:M},
 $a(E)>0$ for all $E\in \Ee(X_k)$.
Without loss of generality, we may suppose that $\mu_0\in \Q$ and then choose $q$ so large that $qa\in H^2(X_k;\Z)$ and $PD(qa)$ has nontrivial Gromov invariant.

Now let $\Hat\Cc_k(\la,\de)$ be a configuration of $k$ symplectic spheres
$C_1,\dots,C_k$ such that\MS

\NI (a) $C_i\cap C_j\ne \emptyset$ iff $|i-j|<2$,\SSS

\NI (b)  $C_i^2 = -2, i<k$, and $C_k^2 = -1$, and  \SSS
 
\NI (c)  area$(C_i) =  \de_i-\de_{i+1}$ if $1\le i< k-1$,
area$(C_{k-1}) = \de_{k-1} + \de_k$, and area$(C_k) = \la - \de_k$,
where 
$\de_1>\dots >\de_{k-1}>0$ and
$\de_k = \sum_{i<k}\de_i$.\MS

If $\la$ and the $\de_i$ are sufficiently small, the 
blow up construction in \S2.1 shows that there is a symplectic form 
$\om_0$  on $X_k$ 
with $\int_L\om_0=1$ and such that the configuration
$\Hat\Cc_k(\la,\de)$ embeds in the complement of a line $L$ 
in $(X_k, \om_0)$ in such a way that 
$C_i$ lies in class $E_i-E_{i+1}$ for $i<k$ and  $[C_k] = E_k$. 
(Observe here that the class $[\om_0]$ is determined by  
$\la$ and the $\de_i$: it will not equal $a$.)  
Moreover, $\Hat\Cc_k(\la,\de)$  has a toric neighborhood $\Nn(\Hat\Cc_k(\la,\de))$ constructed just as in the case $k=4$, and by Lemma~\ref{le:nb} we may suppose that $\Nn(\Hat\Cc_k(\la,\de))$ also embeds
disjointly from $L$.  
For simplicity, we shall identify $\Nn(\Hat\Cc_k(\la,\de))$ with its image in $(X_k,\om_0)$. 

Denote by $\Jj_\Nn$ the set of  $\om$-tame $J$ for which 
$\Hat\Cc_k(\la,\de)$ and the line $L$ are holomorphic.
Because $PD(qa)$ has nontrivial Gromov invariant and $qa(E)>0$ for all 
$E\in \Ee_K$, $PD(qa)$ is represented by
an embedded $J$-holomorphic curve $Q$ for every $\om$-tame $J$ that is 
sufficiently generic.  Since no smooth curve in class $PD(qa)$ is represented entirely in $\Hat\Cc_k(\la,\de)\cup L$, it follows from 
\cite[Ch~3]{MS} that we can take $J$ to be a generic element of $\Jj_\Nn$.   
 Then, by positivity of intersections, the fact that $a(C_i) = 0, i<k,$ implies that $Q\cap C_i = 0$. Further, we may perturb $Q$ so that it   intersects $C_k$   transversally $q$ times and $L$  transversally
 $q\mu_0$ times.

Now inflate along $Q$. This construction gives 
a family $\om_t, t\ge 0,$
of symplectic forms on $X_k$ lying in class $[\om_0] + tqa$ that equal $\om_0$ outside a small neighborhood of $Q$ and restrict
 on $C_k$ (resp.  $L$) to
a symplectic form of area $\om_0(C_k) + qt=\la-\de_k+qt$ (resp. $1+qt\mu_0$). Thus $\Nn(\Hat\Cc_k(\la-\de_k+qt,\de))$
embeds in $(X_k,\om_t)$. Therefore,  by Lemma~\ref{le:1} (see also 
Remark~\ref{rmk:rmk1}), 
$(\la+qt)E(1, k)$ embeds in $\oB(1+qt\mu_0)$ for all $t>0$.
Hence
$$
E(1,k)\;\;\se \;\;\ts{{\frac{1}{\la+qt}}\,\oB(1+qt\mu_0)}.
$$
Since  $\mu_0<\mu$, $\frac{1+qt\mu_0}{\la+qt}<\mu$ for large $t$.
Hence the result.
 \QED

\section{Embedding  ellipsoids into ellipsoids.}

We first show  how to find the weights of the ball embedding problem that is equivalent to a given ellipsoidal embedding problem. Theorem \ref{thm:ell}
 is proved in \S\ref{ss:thm}.

\subsection{Toric approximations to ellipsoids.}\labell{ss:approx}

We begin by discussing inner and outer approximations.
Throughout, $(m,n)$ are mutually prime and $0<m \le n$.
As in the case $(m,n) = (1,k)$, it is possible to approximate the moment polytope
$\De(m,n)$  of  $E(m,n)$
 by a 
smooth polytope $\De'$  by blowing it up appropriately.  Since every moment polytope is  a blow up of $\De(1,1)$
 (up to scaling and integral affine transformation), one can equivalently start with $\De(n,n)$ and by a sequence of blow ups  arrive at a polytope $\De'$ lying inside $\De(m,n)$ and with one conormal equal to $(m,n)$. One can then adjust the side lengths of this polytope to make  its edge $\vareps_N$ with conormal $(m,n)$ coincide with part of the corresponding edge of $\De(m,n)$ and also
 the region $\De(m,n)\less \De'$ arbitrarily small (in area).
We will call such $\De'$ an {\it inner approximation} to $\De(m,n)$;
see Figure ~\ref{fig:6}(ii).  This is the relevant approximation when
 $E(m,n)$ is the target of the embedding.
 If $E(m,n)$ is the source, then as in \S2 one should look for
{\it outer approximations} $\De''\supset \De(m,n)$ such that 
$\De''\less\De(m,n)$ is small.  These are essentially the same as
inner approximations to $\De(n',n')\less \ooDe(m,n)$ for  $n'>n$: see Remark~\ref{rmk:cf} (i).

\begin{figure}[htbp] 
   \centering
   \includegraphics[width=5in]{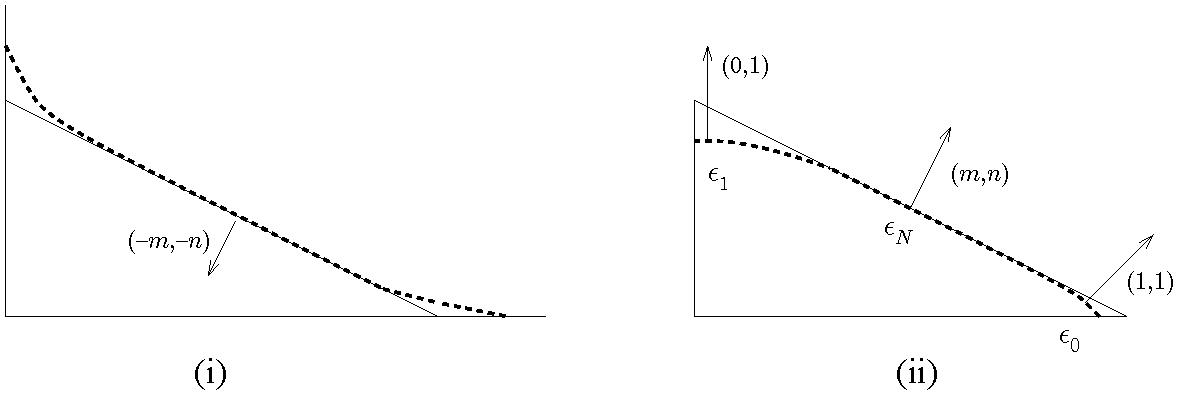} 
   \caption{(i) is an outer approximation to $\De(m,n)$ while
    (ii) is an inner approximation}
   \label{fig:6}
\end{figure}

For clarity, let us first concentrate on inner approximations.
There are many possible choices of approximation. However, as we show below, there is a unique  minimal sequence of blow ups of $\De(n,n)$ with exceptional divisors $E_1, E_2,\dots,E_N$ such that the conormal to the edge $\vareps_N$ given by the last blow up is $(m,n)$ and so that none of the other new edges have 
self intersection $-1$.\footnote{
If one of the other edges did have self-intersection $-1$, it would have to be disjoint from $\vareps_N$ and hence could be blown down.}
 Let us denote by $\vareps_0$ the edge with conormal $(1,1)$ and by $\vareps_i$ the edge created by the $i$th blow up. Further, denote by \MS
 
 $h(\vareps_i)$ the homology class of the edge in the $N$-fold blow up $X_N$;\SSS
 
$\al(\ve_i)$ the affine length of $\ve_i$;\SSS

$\nu(\ve_i)$ the conormal of $\ve_i$.\MS

 Then our conditions imply:
 \MS
 
 \NI $\bullet$  $h(\vareps_N) = E_N$;\SSS
 
\NI $\bullet$ for $1\le i<N$, $h(\vareps_i)=E_i - E_{i_1} -\dots - E_{i_k}$ for suitable $i<i_1<\dots<i_k$ and $k>0$. \SSS

 \NI $\bullet$  $h(\vareps_0) = L - E_{0_1} -\dots - E_{0_k}$ for 
 suitable indices $0_1<\dots< 0_k$, where $L$ is the class of the line in $\C P^2$. \MS

We shall first discuss the blow up process outlined above and then
consider how to choose the lengths $\al(\ve_i)$.
\MS

\NI {\it Construction of the minimal blow up sequence for $(m,n)$.}
The blow up process replaces the intersection of two adjacent edges with conormals $(p,q), (p',q')$
by a new edge with conormal $(p'',q'')= (p+p', q+q')$.  Thus the
three fractions $p/q$ are related by the identity
$$
\frac {p''}{q''} = \frac{p+p'}{q+q'}.
$$
Hence they are adjacent terms in the {\it Farey sequence} $\Ff_K$, $K: = q+q'$.  (Recall from Hardy--Wright \cite[Ch.III]{HW} that
$\Ff_K$ is the finite sequence obtained by arranging the fractions 
$p/q$,  where $0\le p\le q$ are relatively prime and $q\le K$,
 in order of increasing magnitude.)  It is well known that $\Ff_K$ can be constructed by starting with the fractions $\frac 01$ and $\frac 11$ and then repeatedly inserting the fractions $\frac{p+p'}{q+q'}$ with $q+q'\le K$ between any two neighbors $\frac {p}{q},\frac {p'}{q'}$.  Since this precisely corresponds to the blow up procedure, it follows that one always can find a sequence of blow ups 
that starts from $\vareps_0 = (1,1)$ and $\vareps_1=(0,1)$ and ends up with an edge $\vareps_N$ with conormal $(m,n)$.  Moreover, to find a minimal sequence one should include $\frac{p+p'}{q+q'}$ in the sequence 
only if $\frac mn$ lies between
the points $\frac {p}{q}$ and $\frac {p'}{q'}$. We denote the corresponding connected chain of adjacent edges by $\Ee(m,n)$.  This chain, when ordered by increasing $\frac mn$, 
starts with $\vareps_1 = (0,1)$, includes an edge $\vareps_N$ with conormal $(m,n)$ and ends with $\vareps_0=(1,1)$.  (Note that these edges are numbered according to the order in which 
 the blow ups are performed, not 
by adjacency.) 

For any pair $(m,n)$ with $m<n$ we can construct a convex chain of edges 
$\Ee(m,n)$ of this form, such that all edges except for the last one $\vareps_N$ are very short and so that $\vareps_N$ is almost all of the slanted edge of $\De(m,n)$.  To do this, first place $\eps_0$ 
so that it meets the $x$ axis at $x_0: = (n-\de_0,0)$ for small $\de_0>0$ and make the first cut $\vareps_1$ so that it meets the $y$ axis at $y_1:=(0,m-\de_0-\de_1)$ for small $\de_1>0$.  
Then perform all subsequent blow ups so that all the edges
 $\vareps_i$ have positive length and so that $\vareps_N$ 
coincides with part of the line $mx+ny=mn$: see Figure \ref{fig:9}.

\begin{figure}[htbp] 
   \centering
   \includegraphics[width=3in]{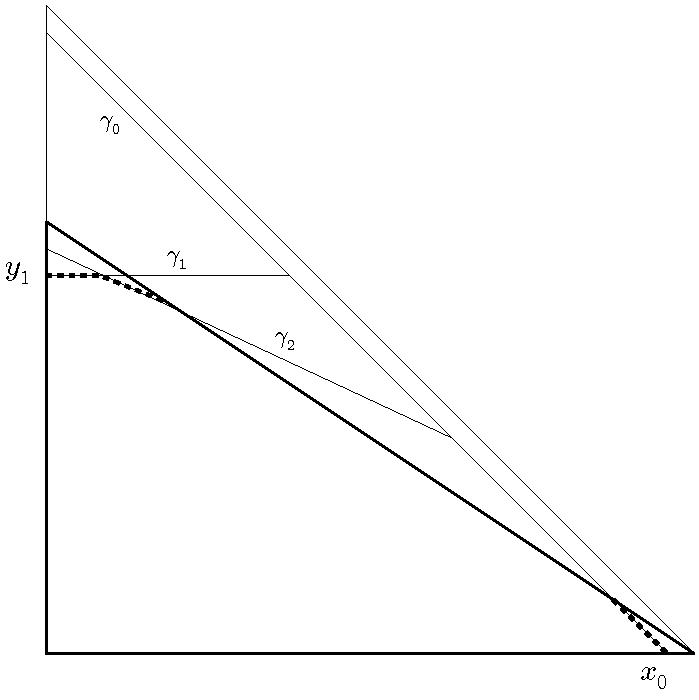} 
   \caption{An inner approximation to $\De(2,3)$.  Here $x_0 = n-\de_0$ and $y_1 = m-\de_0-\de_1$ are the points
    where the edges $\vareps_0,\vareps_1$  meet the  axes.  The cuts $\ga_i$ are also labelled.}
   \label{fig:9}
\end{figure}

Note that if the conormal $(p,q)$ of $\vareps_i$ has $\frac pq< \frac mn$ then the line of the
corresponding cut must meet the $y$ axis at some point $(0,y_i)$ with $y_1<y_i<m$, while if  $\frac pq> \frac mn$ it meets the $x$-axis 
at $(x_i,0)$ where $x_0<x_i<n$.
  It follows
that the edges for $i<N$ must be very short, while $\vareps_N$ is almost all the slanted edge of $\De(m,n)$.   

This chain of edges $\Ee(m,n)$ lies in $\De(m,n)$ and bounds a smooth subpolytope $R_N$ of $\De(m,n)$.  The above sequence of blow ups 
gives a way to construct 
the polytope $R_N$  
from $\De(n-\de_0,n-\de_0)$ by performing a sequence of blow ups
with conormals $\nu(\vareps_1),\dots,\nu(\vareps_N)$.
We shall denote by $R_i, i\ge 1$ the region obtained after the $i$th blow up and by $\ga_i$ the new edge of $R_i$ formed by the $i$th blow up.  Thus $\ga_i$ is an extension of $\vareps_i$. Notice that $R_{i} = R_{i-1}\less \De_i$, where the triangle $\De_i$ is equivalent under the action of $\SL(2,\Z)$ to $\mu_i\De(1,1)$ and where $\mu_i$ is the affine length of the cut edge $\ga_{i}$.  It follows from this construction that the homology classes $h(\vareps_i)$ of the edges $\vareps_i$ are given by
\begin{equation}\labell{eq:hi}
h(\vareps_i) = E_i - \sum_{j\in S_i}E_j,\mbox{ where }
S_i = \{j>i:\mbox{ the edge } \ga_j \mbox{ intersects }\ga_i\}.
\end{equation}

%


%

%


As well as this minimal blow up sequence, we shall need an integral  homology class $V_{m,n}: = k_0L - \sum_{i=1}^Nk_iE_i$ with the property that
\begin{equation}\labell{eq:V}
V_{m,n}\cdot h(\vareps_N) = 1,\quad V_{m,n}\cdot h(\vareps_i) = 0,\;\; 0\le i<N.
\end{equation}
(This vector $V_{m,n}$ will give the weights of the corresponding ball embedding problem.)
For example, if $(m,n) = (7,12)$ then the conormals $(p_i,q_i)$ to the sequence of edges starting at $\vareps_1=(0,1)$ and going to $\vareps_0=(1,1)$ 
have slopes 
\begin{equation}\labell{eq:mn}
\frac pq = \frac 01,\;\; \frac 12, \;\;\frac 47,\;\;\frac 7{12},\;\;\frac 35,\;\;\frac 23,\;\;\frac 11.
\end{equation}
These edges are numbered 
$$
\ve_1,\ve_2,\ve_5,\ve_6,\ve_4,\ve_3,\ve_0
$$
according to the order of the blow ups.
They have classes 
$$
\begin{array}{lllllll}
 h(\ve_6)=E_6,&& h(\ve_5)=E_5-E_6,&&h(\ve_4)= E_4-E_5-E_6,\\
 h(\ve_3)= E_3-E_4,&& h(\ve_2)=E_2-E_3-E_4-E_5,&&
 h(\ve_1)=E_1-E_2,\\
 && h(\ve_0)= L-E_1-E_2-E_3&&\end{array}
$$
as one can check by performing the relevant blow ups.
Hence,
\begin{equation}\labell{eq:V7}
V_{7,12} = 12 L - 5E_{12}-2E_{34}-E_{56},
\end{equation}
where $E_{j\dots k}: = \sum _{i=j}^k E_i$
Note that $V_{7,12}^2 = 7\cdot 12 = 84$. 
This is a general fact, which is  known in other contexts; cf.
Remark \ref{rmk:cf}.  However we include a proof
here  for completeness.

\begin{lemma} \label{le:ki} For each relatively prime pair $(m,n)$ with $0\le m<n$ there is a unique primitive integral  homology class $V_{m,n}: = nL - \sum_{i=1}^Nk_iE_i$ 
satisfying (\ref{eq:V}).  Moreover, $V_{m,n}^2 = mn$.
\end{lemma}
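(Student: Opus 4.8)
The plan is to prove existence and uniqueness of $V_{m,n}$ by induction on the number $N$ of blow ups in the minimal sequence $\Ee(m,n)$, using the recursive structure of that sequence. Recall that the chain $\Ee(m,n)$, ordered by increasing slope of the conormal, runs from $\ve_1=(0,1)$ through an edge $\ve_N$ with conormal $(m,n)$ to $\ve_0=(1,1)$, and that the last blow up is performed at the vertex where two edges with conormals $(p,q)$ and $(p',q')$ meet, with $(m,n)=(p+p',q+q')$ and $\frac pq<\frac mn<\frac{p'}{q'}$. The two fractions $\frac pq,\frac{p'}{q'}$ are the Farey parents of $\frac mn$, so $mq'-nq'\cdots$ — more precisely $qp'-pq'=1$ — and the pair $(p,q)$, $(p',q')$ are themselves endpoints of shorter minimal chains. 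First I would set up the homology: after the $i$th blow up the new edge $\ve_i$ acquires class $h(\ve_i)=E_i-\sum_{j\in S_i}E_j$ as in (\ref{eq:hi}), and the condition (\ref{eq:V}) is a system of $N+1$ linear equations in the $N+1$ unknowns $k_0,\dots,k_N$ (with $k_0=n$ forced, as we will see). I would show this system is triangular in a suitable ordering — since $S_i\subseteq\{i+1,\dots,N\}$, the equation $V_{m,n}\cdot h(\ve_i)=0$ reads $k_i=\sum_{j\in S_i}k_j$ for $1\le i<N$, which determines $k_i$ from later $k_j$'s, while $V_{m,n}\cdot h(\ve_N)=1$ pins down the final normalization and $V_{m,n}\cdot h(\ve_0)=0$ forces $k_0=\sum_{j\in S_0}k_j$. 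This immediately gives existence and uniqueness of an integral solution; primitivity follows because $k_N=1$ appears (the coefficient of $E_N$ in $V_{m,n}$ is $1$, since $h(\ve_N)=E_N$ and $V_{m,n}\cdot E_N=1$), so $\gcd$ of the coefficients is $1$.

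The substantive part is the identity $V_{m,n}^2=mn$. Here I would argue by induction using the Farey/continued-fraction recursion. Write $(m,n)=(p,q)+(p',q')$ with Farey parents as above; the minimal chain $\Ee(m,n)$ is obtained from the chains for $(p,q)$ and $(p',q')$ by one extra blow up at their common vertex, and correspondingly $V_{m,n}$ should be expressible in terms of $V_{p,q}$ and $V_{p',q'}$ (suitably reindexed into the common blow-up sequence, and each augmented by a $-E_N$ term or not according to whether $\ga_N$ meets the relevant earlier edges). I expect the clean statement to be $V_{m,n}=V_{p,q}+V_{p',q'}$ once both are viewed in $H^2(X_N)$ — this is consistent with $k_0=n=q+q'$ — together with the orthogonality relations $V_{p,q}\cdot V_{p',q'}=pq'\ \text{or}\ \tfrac12(\dots)$; the correct cross-term, forced by $qp'-pq'=1$, should come out to $V_{p,q}\cdot V_{p',q'}=\tfrac12\bigl((p+p')(q+q')-pq-p'q'\bigr)=\tfrac12(pq'+p'q)$, whence $V_{m,n}^2=V_{p,q}^2+2V_{p,q}\cdot V_{p',q'}+V_{p',q'}^2=pq+(pq'+p'q)+p'q'=(p+p')(q+q')=mn$ by the inductive hypothesis $V_{p,q}^2=pq$, $V_{p',q'}^2=p'q'$. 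The base case is $(m,n)=(0,1)$ and $(1,1)$, where $V_{0,1}=E_{\text{pt}}$-type class with square $0$ and $V_{1,1}=L$ with square $1$, matching $mn$.

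The main obstacle, and the step I would spend the most care on, is making the recursion $V_{m,n}=V_{p,q}+V_{p',q'}$ precise at the level of exceptional classes: the three chains $\Ee(m,n),\Ee(p,q),\Ee(p',q')$ live over blow-up sequences of different lengths, and one must track exactly which $E_j$'s appear in each $h(\ve_i)$ — i.e. pin down the sets $S_i$ of (\ref{eq:hi}) — to verify that the sum of the reindexed solutions still satisfies (\ref{eq:V}) for the enlarged chain, in particular the equation coming from the brand-new edge $\ve_N$ and from the two edges $(p,q),(p',q')$ whose $S$-sets grow by $\{N\}$. An alternative, possibly cleaner route that avoids this bookkeeping: observe that $V_{m,n}$ is characterized up to sign by being the unique primitive class orthogonal to all $h(\ve_i)$, $i<N$, inside the rank-$N$ sublattice they span inside the unimodular lattice $H^2(X_N;\Z)\cong\langle1\rangle\oplus N\langle-1\rangle$; then $V_{m,n}^2$ equals (up to sign) the determinant of the Gram matrix of $\{h(\ve_0),\dots,h(\ve_{N-1})\}$ divided by appropriate cofactors, and this determinant can be computed directly from the tridiagonal-type intersection pattern of the chain $\Ee(m,n)$ (the edges form a chain, so $h(\ve_i)\cdot h(\ve_{i+1})$ is mostly $1$ and the self-intersections are $-2$ except at the ends), reducing $V_{m,n}^2=mn$ to a determinant evaluation that is itself the continued-fraction expansion of $n/m$. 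I would present the inductive argument as the main proof and mention the determinant computation as the conceptual reason it works, referring to Remark~\ref{rmk:cf} for the continued-fraction interpretation.
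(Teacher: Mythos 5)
Your overall strategy is the same as the paper's: set up the triangular system $k_N=1$, $k_i=\sum_{j\in S_i}k_j$ coming from (\ref{eq:hi}) to get existence, uniqueness and primitivity, and then prove $k_0=n$ and $V_{m,n}^2=mn$ by induction over the Farey recursion, strengthening the inductive hypothesis to include a formula for the cross-term $V_{p,q}\cdot V_{p',q'}$ of Farey neighbours. That first part is fine, and the paper does exactly this.

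However, the specific recursion you commit to is wrong, and the error is detectable without reference to the paper. You propose $V_{m,n}=V_{p,q}+V_{p',q'}$ (after reindexing) together with $V_{p,q}\cdot V_{p',q'}=\tfrac 12(pq'+p'q)$. Since $|qp'-pq'|=1$, the integers $pq'+p'q$ and $pq'-p'q$ have the same parity, so $pq'+p'q$ is odd and your cross-term is a half-integer --- impossible for the intersection pairing of two integral classes in the unimodular lattice $H_2(X_N;\Z)$. The correct relations are $2V_{p,q}\cdot V_{p',q'}=1+pq'+p'q$ and $V_{m,n}=V_{p,q}+V_{p',q'}-E_{N''}$, where $E_{N''}$ is the exceptional class of the one new blow up. The $-E_{N''}$ is not optional: precisely because the $S$-sets of the edges with conormals $(p,q)$ and $(p',q')$ each grow by $\{N''\}$, one has $h''(\vareps_N)=h'(\vareps_N)-E_{N''}$ and $h''(\vareps_{N'})=h'(\vareps_{N'})-E_{N''}$, so the bare sum $V_{p,q}+V_{p',q'}$ pairs to $1$ (not $0$) with $h''(\vareps_N)$ and to $0$ (not $1$) with $h''(\vareps_{N''})=E_{N''}$; subtracting $E_{N''}$ fixes both, and the arithmetic then closes as $(V+V'-E_{N''})^2=pq+p'q'+(1+pq'+p'q)-1=(p+p')(q+q')$. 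You flagged this bookkeeping as the main obstacle but then guessed the answer incorrectly; as written, your candidate class fails (\ref{eq:V}) for the enlarged chain and your cross-term is not even integral, so the induction does not close. (A quick sanity check against the worked example $V_{10,17}=V_{3,5}+V_{7,12}-E_7$ would have caught this.)
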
 
\begin{proof} Let us call the coefficient $k_i$ of $E_i$ in $V_{m,n}$ the {\it label} of the edge $\vareps_i$. 
Equation (\ref{eq:hi}) implies that $V_{m,n}$ will satisfy   (\ref{eq:V}) if  the labels 
 $k_N, k_{N-1},\dots, k_0$ are assigned as follows.
  
\begin{quote} {\it Set $k_N=1$.  Given
 $k_j, j>i$, define $k_i$  to be the sum of the labels of the edges $\vareps_j, j\in S_i$.}
\end{quote}

Since the classes $h(\vareps_i), i=0,\dots, N,$ generate $H_2(X_N)$
there is obviously a unique $V_{m,n}$ satisfying (\ref{eq:V}).  Therefore, it remains to check that $k_0=n$ and that $V_{m,n}^2=mn$. 

Using induction, we shall show that for all $(m,n)$ such that $m+n\le K$ then $V_{m,n}$ as defined above
 has the required properties.  Moreover if $(m',n')$ is another pair with $m'+n'\le K$  and if $|mn'-m'n|=1$ then
\begin{equation}\labell{eq:VV}
2V_{m,n}\cdot V_{m',n'} = 1 + mn' + m'n.
\end{equation}
The base case is $K=2$.  Then $V_{0,1} = L-E_1$ and $V_{1,1} = L$.
The required properties are easily verified.
  
  Suppose the result is known for $m+n<K$ and consider a pair $(m'',n'')$ with $m''+n''=K$.  Let $(m,n), (m',n')$ be 
 the neighbors of $(m'',n'')$ in the Farey sequence $\Ff_{n''}$, named so that $n<n'$. 
  Then $m+n<K, m'+n'<K$ and also $|mn'-m'n|=1$. (Note that this implies
  $
  |m''n-mn''|=1, |m''n'-m'n''|=1$.)
   To complete the inductive step, we will show that
\begin{equation}\labell{eq:V''}
  V'': = V_{m,n}+ V_{m',n'} - E_{N''}
\end{equation}
  satisfies all the conditions required of $V_{m'',n''}$, where the edge with conormal 
  $(m'', n'')$ is called $\vareps_{N''}$. 
  
  To make sense of this formula, note that because
  $n<n'$, $\frac mn$ occurs as part of the Farey sequence $\Ff_{n'}$.  
  Hence, because $|mn'-m'n|=1$, $\frac mn$ and $\frac {m'}{n'}$ are adjacent in this Farey sequence.
    It follows that all the edges in $\Ee: =\Ee(m,n)$ occur in $\Ee':= \Ee(m',n')$: indeed $\Ee'$ may be obtained from $\Ee$ by repeatedly blowing up at the vertex of the edge $\vareps_N$
  \lq\lq closest" to $\frac {m'}{n'}$.  (For example, if  
  $\frac mn<  \frac{m'}{n'}$ then one blows up at the vertex of $\vareps_N$ closest to the $x$-axis.)  In particular the edge $\vareps_N$ of 
  $\Ee$, when considered as part of $\Ee'$, is adjacent to $\vareps_{N'}$.
  Therefore we may consider the classes $E_i$ that occur in $V: = V_{m,n}$ to be a subset of those occurring in 
  $V': = V_{m',n'}$ so that the above formula for $V''$ makes sense.  Moreover, if $h'(\vareps_i)$ denotes the class in $X_{n'}$ represented by $\vareps_i$ then, for all
 $i\le N$, $h(\vareps_i)$ is the class obtained from   $h'(\vareps_i)$ by setting $E_j=0, j>N$.
 Similarly, $V$ is the class obtained from $V'$ by setting all $E_j, j>N,$ to $0$. 
   
Now observe that, because $(m,n)$ and $(m',n')$ are the neighbors of
$(m'',n'')$ in $\Ff_{n''}$, 
 $\Ee'': = \Ee(m'',n'')$ is obtained from $\Ee'$ by 
 adding one extra edge 
 $\vareps_{N''}$ between $\vareps_{N}$ and $\vareps_{N'}$. Hence, $N'' = N'+1$.
 Further, if we denote the class of $\vareps_i$ in $\Ee''$  by
 $h''(\vareps_i)$,
$$
h''(\vareps_{N'}) = h'(\vareps_{N'}) - E_{N''},\quad 
h''(\vareps_{N}) = h'(\vareps_{N}) - E_{N''},\quad 
h''(\vareps_i) = h'(\vareps_i), \; i\ne N,N'.
 $$
It follows easily that if $V''$ is defined by equation
(\ref{eq:V''}) then the relations (\ref{eq:V}) hold. Hence $k_0''=k_0+k_0' = n+n'=n''$.
Moreover by equation (\ref{eq:VV})
\begin{eqnarray*}
(V'')^2&=& (V+V' - E_{N''})^2 \\
&=& mn + m'n' + 2VV' - 1\\
&=& (m+m')(n+n') = m''\,n''.
\end{eqnarray*}
It remains to check that $VV''$ and $V'V''$ satisfy the analog of
 equation (\ref{eq:VV}).  This is left to the reader. 
  \end{proof}
  
  \begin{example}\rm
   To illustrate this, consider the case 
   $(m'',n'')=(10,17)$ with Farey neighbors
   $(m,n) = (3,5), (m',n') =(7,12)$.  The edges in $\Ee(3,5)$ have slopes
   $$
   \frac pq =  \frac 01,\;\; \frac 12, \;\;\frac 35,\;\;\frac 23,\;\;\frac 11,
$$
and their classes are
$$
\begin{array}{lllll}
E_1-E_2,& E_2-E_3-E_4, &
 E_4, &  E_3-E_4,&  L-E_1-E_2-E_3.
 \end{array}
$$
Thus $V_{3,5} = 5L -2E_{12}- E_{34}$. Therefore,
by  (\ref{eq:V7}),  formula (\ref{eq:V''}) gives
\begin{equation}\labell{eq:1017}
V_{10,17} =   17 L- 7E_{12}-3E_{34}-E_{567}.
\end{equation}
\end{example}

\begin{figure}[htbp] 
   \centering
   \includegraphics[width=4in]{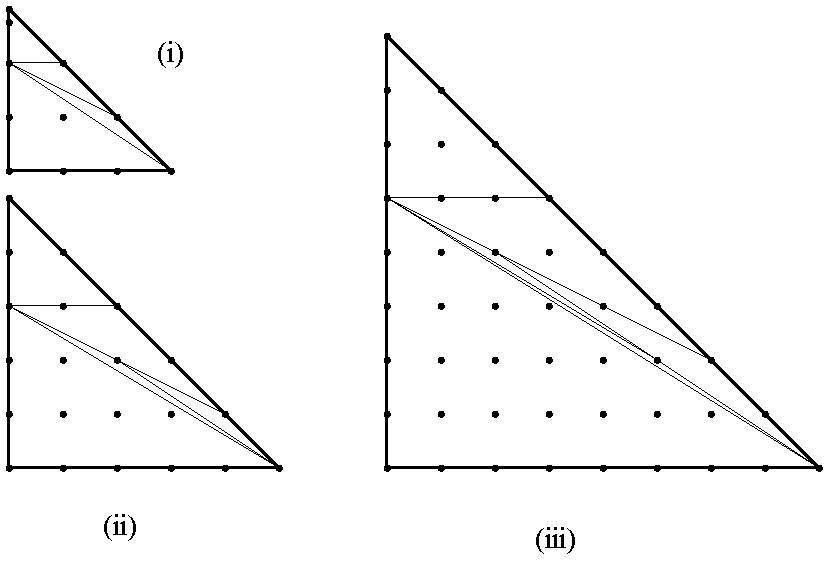} 
   \caption{Blowing up  $\De(n,n)$ to $\De(m,n)$ for $(m,n)$ equal to
   (i)  $(2,3)$, (ii) $(3,5)$ and (iii) $(5,8)$. The points of the integral lattice are marked. Note  that
   $V_{2,3} = 3L-E_{123}$,
   $V_{3,5} = 5L-2E_{12}-E_{34}$ and
   $V_{5,8} = 8L-3E_{12} -2E_3-E_{45}$.  The sizes of the triangles in each decomposition are given by the coefficients of the $E_i$ in each $V_{m,n}$.}
   \label{fig:8}
\end{figure}

In the above discussion each edge $\vareps_i$ was assumed to have positive length, although it was very short for $i<N$. Now imagine performing these cuts so that these edges have zero length.  In other words, at each stage construct $R_i^0$ by cutting out a vertex of $R_{i-1}^0$ together
with the whole of the shorter adjacent edge.  Thus 
each cut 
 $\ga_i^0$ has a vertex at $(0,m)$ or at $(n,0)$ and the end result is $R_N^0: = \De(m,n)$. Thus these cuts
 decompose the triangle $T(m,n): = \De(n,n)\less \ooDe(m,n)$ into a union of triangles, each equivalent to a multiple $\mu_i^0\, \De(1,1)$ of the standard triangle: see  Figure \ref{fig:8}.  We will think of this decomposition of   $T(m,n)$ as corresponding to a singular (nonsmooth) blow up of $\De(n,n)$.

We now show that the multiplicities $\mu_i^0$ are 
precisely the weights $k_i$.  Since the area of the cut triangles 
is $\sum_{i=1}^N (\mu_i^0)^2$ this shows that
$$
n^2-mn = \sum_{i=1}^N k_i^2,
$$
which gives a geometric explanation for the quadratic relation
$V_{m,n}^2=mn$.
 
\begin{rmk}\label{rmk:area}\rm
It turns out that this geometric blow up procedure is very closely related to the construction of an appropriate continued fraction.   This is easiest to see in the context of outer approximations: see Remark~\ref{rmk:cf}.
\end{rmk}

\begin{lemma}\labell{le:V}  Write $ V_{m,n}= nL -\sum k_iE_i$.  
Then $\De(m,n)$ is the (nonsmooth) blow up of $\De(n,n)$ where the cuts have conormals $\nu(\ve_1),\dots,\nu(\ve_N)$ and weights  $k_1,k_2,\dots,k_N$. 
\end{lemma}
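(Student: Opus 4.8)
The plan is to match the weight of each cut in the nonsmooth blow up of $\De(n,n)$ against the label $k_i$, using the downward recursion that defines the $k_i$ in the proof of Lemma~\ref{le:ki}. Write $\mu_i^0$ for the affine length of the $i$th cut edge in the nonsmooth decomposition of $T(m,n)$, so that the triangle removed at the $i$th stage is equivalent under $\SL(2,\Z)$ to $\mu_i^0\De(1,1)$; by definition $\mu_i^0$ is the \lq\lq weight" of that cut, and the assertion of the lemma is precisely that $\mu_i^0=k_i$ for $i=1,\dots,N$.

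The heart of the argument is a length count, which I would first carry out in the smooth model (with tiny positive $\de$'s). Let $\ga_j$ be a cut with $j\in S_i$. When $\ga_j$ is performed it removes a triangle $\De_j$ equivalent under $\SL(2,\Z)$ to $\mu_j\De(1,1)$; since all three edges of the standard triangle $\De(1,1)$ have affine length $1$, all three edges of $\De_j$ have affine length $\mu_j$. One of them lies along the new edge $\ga_j$, and --- exactly because $\ga_j$ meets $\ga_i$, i.e. $j\in S_i$ --- another lies along the current version of $\ga_i$. Thus performing $\ga_j$ shortens $\ga_i$ by an affine length of exactly $\mu_j$. The stretches of $\ga_i$ removed in this way for the various $j\in S_i$ are pairwise disjoint and, together with the surviving edge $\ve_i$, tile $\ga_i$, since any point of $\ga_i$ that ever disappears does so at the stage of a cut meeting $\ga_i$, hence of some $j\in S_i$. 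Recalling that $\mu_i$ is the affine length of $\ga_i$, this gives
$$
\mu_i=\al(\ve_i)+\sum_{j\in S_i}\mu_j ,
$$
and the identical relation holds in the nonsmooth model with every $\mu$ replaced by its superscript-$0$ analogue, since passing to $\de=0$ changes only sizes, not the adjacency data encoded in the $S_i$.

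Now I specialize to the nonsmooth model. For $i<N$ the edge $\ve_i$ has collapsed to a point, so $\al(\ve_i)=0$ there and the relation becomes $\mu_i^0=\sum_{j\in S_i}\mu_j^0$. For $i=N$ the cut has conormal $(m,n)$ and produces the entire slanted edge of $\De(m,n)$, which has no interior lattice point because $\gcd(m,n)=1$; hence $\mu_N^0=1$. (Similarly $\mu_0^0$ is the affine length $n$ of the slanted edge of $\De(n,n)$, matching $k_0=n$.) But \lq\lq$k_N=1$" and \lq\lq$k_i=\sum_{j\in S_i}k_j$ for $i<N$" are exactly the rules by which the labels are assigned in the proof of Lemma~\ref{le:ki}, and this triangular system has an obviously unique solution, the $k_i$ being determined one at a time in decreasing order of $i$. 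Hence $\mu_i^0=k_i$ for every $i$, which is the lemma; adding the areas $\tfrac12(\mu_i^0)^2$ of the pieces then recovers $\sum_i k_i^2=n^2-mn=V_{m,n}^2$ as well.

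The only delicate point, more bookkeeping than genuine obstacle, is the tiling claim in the second paragraph: one must check that in the degenerate cuts the triangle removed when an edge collapses really accounts for all of the relevant stretch of $\ga_i$, and that no stretch of $\ga_i$ is consumed by a cut $\ga_j$ with $j\notin S_i$. Both reduce to the observation that a sub-segment of $\ga_i$ can disappear only at the stage of a cut that meets $\ga_i$. One should also take a moment, when comparing the two pictures, to note that shrinking the $\de$'s to $0$ leaves the combinatorial type of the blow up sequence --- and hence the sets $S_i$ --- unchanged.
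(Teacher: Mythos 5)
Your argument is correct and is essentially the paper's own proof: both establish the relation $\mu_i^0=\al(\ve_i)+\sum_{j\in S_i}\mu_j^0$ (so that $\mu_i^0=\sum_{j\in S_i}\mu_j^0$ for $i<N$ when the edges $\ve_i$ collapse), note $\mu_N^0=1$, and conclude by matching against the defining recursion for the labels $k_i$ from Lemma~\ref{le:ki} together with formula (\ref{eq:hi}). You simply spell out in more detail the tiling/length-count step that the paper states in one line.
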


\begin{proof}  The coefficient $\mu_i^0$ is just the affine length of the cut $\ga_i$, i.e. it is the length of the corresponding edge of $R_i^0$. This edge represents the class $\mu_i^0 E_i$ in $H_2(X_N)$.  Thus $\mu_N^0 = 1$, the affine length of   
the slanted edge $\ga_N$ of $\De(m,n)$. One can now argue 
that $\mu_i^0 = k_i$ for $i=n_1,n_2,\dots,1$ in turn.  The point is that the $i$th cut leaves an \lq\lq edge" $\vareps_i$ in class $h(\vareps_i)$ of length $0$.  But $\vareps_i$ is the result of cutting $\ga_i$ by cuts of length $\mu_j^0, j\in S_i$.  Therefore the result follows from formula (\ref{eq:hi}) and the definition of the $k_i$ given in the proof of Lemma~\ref{le:ki}.
\end{proof}

\begin{cor} \labell{cor:V}
The open subset of $\C P^2(n)$ with moment polytope $\ooDe(n,n)\less
\De(m,n)$ contains $N$ open balls of sizes $k_1,\dots,k_N$.
\end{cor}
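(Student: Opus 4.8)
The plan is to deduce Corollary~\ref{cor:V} directly from Lemma~\ref{le:V}, in close analogy with the proof of Lemma~\ref{le:open}.  First I would apply Lemma~\ref{le:V} to write the region $T(m,n):=\De(n,n)\less\ooDe(m,n)$ as a union of $N$ triangles $\De_1,\dots,\De_N$ with pairwise disjoint interiors, where each $\De_i$ is carried by an integral affine transformation onto the standard triangle $k_i\De(1,1)$.  Next I would do the boundary bookkeeping needed to place these triangles where we want them: in the zero-length limit the slanted edge of $\De(m,n)$ is the single last cut $\ga_N^0$, hence a face of $\De_N$ and not the interior of any $\De_i$, and likewise each of the three edges of $\De(n,n)$ lies in the boundary of just one of the $\De_i$.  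Consequently the open triangles $\ooDe_i:=\intt\De_i$ are pairwise disjoint, are contained in $\intt\De(n,n)$, and are disjoint from $\De(m,n)$, so that $\bigsqcup_i\ooDe_i$ is an open subset of $\ooDe(n,n)\less\De(m,n)$.

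Then I would transport everything to $\C P^2(n)$ via its moment map $\Phi:\C P^2(n)\to\De(n,n)$.  Over $\intt\De(n,n)$ the map $\Phi$ is a trivial $T^2$-bundle carrying the action--angle form, and for any open $W\subset\intt\De(n,n)$ the symplectic manifold $\Phi^{-1}(W)\cong W\times T^2$ is unchanged, up to symplectomorphism, when $W$ is moved by an integral affine transformation or translated.  Taking $W=\ooDe_i$ and the integral affine transformation carrying $\ooDe_i$ onto $k_i\breve\De$, this identifies $\Phi^{-1}(\ooDe_i)$ with the open set $U_i=\{(z_1,z_2): z_1z_2\ne 0,\ |z_1|^2+|z_2|^2<k_i\}\subset B(k_i)$, which by Traynor's result (the one used in the proof of Lemma~\ref{le:open}, after rescaling by $k_i$) contains an embedded open ball $\oB(k_i)$.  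Since the $\ooDe_i$ are pairwise disjoint, so are their $\Phi$-preimages, and hence the $N$ balls $\oB(k_1),\dots,\oB(k_N)$ are disjointly embedded in $\Phi^{-1}(\bigsqcup_i\ooDe_i)\subset\Phi^{-1}(\ooDe(n,n)\less\De(m,n))$, which is what is claimed.  As a consistency check, $\sum_i\pi^2k_i^2/2=\pi^2(n^2-mn)/2$ is exactly the volume of the target region, so the packing is in fact a full filling.

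The tiling is already provided by Lemma~\ref{le:V} and the final disjointness count is immediate, so the one step that needs genuine care --- and the one I would flag as the main obstacle --- is the identification $\Phi^{-1}(\ooDe_i)\cong U_i$.  Traynor's theorem exhibits a ball inside the open set $U$ sitting at a \emph{corner} of the standard ball, whereas our triangles $\ooDe_i$ lie in the \emph{interior} of the moment polytope of $\C P^2(n)$.  The point to make precise is the standard fact that over the interior of any moment polytope the toric structure is the plain action--angle model, invariant under integral affine transformations of the base and under translations, so that an interior region affine-equivalent to $k_i\breve\De$ has a neighbourhood symplectomorphic to the corner neighbourhood $U_i$; only then does Traynor's embedding transfer.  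Everything else is bookkeeping of the kind already carried out above.
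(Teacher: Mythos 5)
Your argument is correct and is essentially the paper's own proof, which simply states that the corollary ``holds as in Lemma~\ref{le:open}'': one takes the triangle decomposition of $T(m,n)$ provided by Lemma~\ref{le:V} and transfers Traynor's ball $\oB(k_i)\subset U_i$ into the moment-map preimage of each open triangle. The interior-versus-corner point you flag is resolved exactly as you say, since $U_i$ is the preimage of the \emph{open} triangle and so lies entirely in the free part of the torus action, where the action--angle model is invariant under integral affine transformations.
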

\begin{proof} This holds as in Lemma~\ref{le:open}.
\end{proof}

With this notation in hand, we can now discuss 
 inner approximations with more precision. 
%
As explained abve, an  inner approximation 
to $\De(m,n)$ is obtained by 
moving the edge of $\De(n,n)$ with conormal $(1,1)$ a little closer to the origin (so that it
meets the $x$-axis at the point $(n-\de_0,0)$ for some $\de_0>0$), and then
slightly adjusting the size of all the subsequent blow ups 
from $k_i$ to $k_i+\de_i$ so as not to cut out quite all of an edge at each blow up.  (See Figure \ref{fig:9}. Note that some of the $\de_i$ may be negative; cf. the construction of
$\Hat\Cc_4(\la,\de)$ in \S2.)
For suitable $\de_i$ this will 
 create a smooth polygonal arc $\Ee(m,n;\de)$ whose edges $\ve_i, 0\le i\le N,$ have conormals $\nu(\ve_i)$ as described above.
 
 Define 
\begin{equation}\labell{eq:a}
a_{m,n;\de}: = (n-\de_0)\ell - \sum_{i=1}^{N}(k_i+\de_i)e_i\in H^2(X_N),
\end{equation}
where $e_i$ is Poincar\'e dual to $E_i$, i.e. $e_j(E_i) = -\de_{ij}$.
Since $(n-\de_0)\ell$ is the cohomology class of the symplectic form on $\C P^2(n-\de_0)$, $a_{m,n;\de}$ is the class of the symplectic form  on $X_N$ obtained from $\CP^2(n-\de_0)$ by the  blow up procedure explained above with the $i$th blow up of  size $k_i+\de_i$. 
Hence the affine length $\al(\ve_i)$ of the edge $\ve_i$ is
$$
\al(\ve_i)=a_{m,n;\de}(E_i)>0.
$$
Because $a_{m,n;0}$ is  the Poincar\'e dual of $V_{m,n}$, 
 all the edges of $\Ee(m,n;\de)$ are very short except for $\ve_N$ which has affine length nearly $1$.

 \begin{defn}\labell{def:allow}
 We say that $\de: = (\de_0,\dots,\de_N)$ is {\bf admissible} if:\SSS
 
 \NI (i)  $\de_0,\, \de_1>0$;\SSS
 
 \NI (ii)  the edges $\ve_0,\dots,\ve_N$ of $\Ee(m,n;\de)$ have positive lengths $\al(\ve_i)$;
 \SSS
 
 \NI (iii) $ \Ee(m,n;\de)\;\;\subset\;\; \De(m,n)\less\Bigl(r \,\ooDe(m,n)\Bigr)$ where $r: = 1-\de_0-\de_1$.
 \end{defn}
 
 Note that condition (ii) implies that $\Ee(m,n;\de)$ is a chain of edges with the same intersection properties as $\Ee(m,n)$. Hence the slopes decrease as one moves along $\Ee(m,n;\de)$ from $\ve_1$ to $\ve_0$, so that it is a convex polygonal arc.  Therefore because $\Ee(m,n;\de)$ has endpoints $(0,m-\de_0-\de_1)$ and $(n-\de_0,0)$ where $\de_0, \de_0+\de_1>0$,  it lies outside  
 $\frac {m-\de_0-\de_1}m \;\ooDe(m,n)$.  Therefore,
 to prove (iii) one must simply check that it lies inside $\De(m,n)$.

\begin{lemma}\labell{le:Eede} If $\de$ is admissible, so is 
$t \de$ for all $0<t\le 1$.
\end{lemma}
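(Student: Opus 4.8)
The plan is to exploit that the class $a_{m,n;\de}$ of (\ref{eq:a}), and with it every affine length and every edge-position of $\Ee(m,n;\de)$, depends affinely on $\de$. The only identity I need is
$$
a_{m,n;t\de}\;=\;(1-t)\,a_{m,n;0}\;+\;t\,a_{m,n;\de},
$$
which is immediate from (\ref{eq:a}) since $n-\de_0$ and each $k_i+\de_i$ are affine in $\de$. Condition~(i) of Definition~\ref{def:allow} is then trivial, since $t\de_0,t\de_1>0$ whenever $\de_0,\de_1>0$ and $t>0$.

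For condition~(ii): the homology class $h(\ve_i)$ carried by the $i$th edge depends only on the (fixed) blow-up combinatorics and not on $\de$, and the affine length $\al(\ve_i)$ is a linear function of $a_{m,n;\de}$ (as noted just before Definition~\ref{def:allow}). Hence the displayed identity gives $\al_{t\de}(\ve_i)=(1-t)\,\al_0(\ve_i)+t\,\al_\de(\ve_i)$. Here $\al_0(\ve_i)\ge 0$: indeed $a_{m,n;0}=PD(V_{m,n})$, so by property~(\ref{eq:V}) we have $\al_0(\ve_i)=V_{m,n}\cdot h(\ve_i)$, which is $1$ for $i=N$ and $0$ for $i<N$. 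Since $\de$ is admissible, $\al_\de(\ve_i)>0$. Therefore $\al_{t\de}(\ve_i)\ge t\,\al_\de(\ve_i)>0$ for all $0<t\le 1$, which is~(ii).

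For condition~(iii) I would first invoke the remark following Definition~\ref{def:allow}: once (i) and (ii) hold, $\Ee(m,n;t\de)$ is automatically a convex polygonal arc joining $(0,m-t\de_0-t\de_1)$ to $(n-t\de_0,0)$, hence lies outside $\frac{m-t\de_0-t\de_1}{m}\,\ooDe(m,n)$ and therefore, since $m\ge 1$, outside $r\,\ooDe(m,n)$ with $r=1-t\de_0-t\de_1$; so it only remains to check $\Ee(m,n;t\de)\subset\De(m,n)$. Now each edge $\ve_i$ of $\Ee(m,n;\de)$ lies on a line with conormal $\nu(\ve_i)$ (independent of $\de$) and supporting constant an affine function of $\de$ (see below); consequently each vertex of $\Ee(m,n;\de)$, being the intersection of two consecutive such lines — and this includes the two endpoints of the arc — is an affine function of $\de$. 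Hence the corresponding vertex of $\Ee(m,n;t\de)$ is $(1-t)$ times that of $\Ee(m,n;0)$ plus $t$ times that of $\Ee(m,n;\de)$. The vertices of $\Ee(m,n;0)$ all lie in $\De(m,n)$ (on its slanted edge, or at the corners $(0,m)$ and $(n,0)$), and those of $\Ee(m,n;\de)$ lie in $\De(m,n)$ because $\de$ is admissible; by convexity of $\De(m,n)$ all vertices of $\Ee(m,n;t\de)$ lie in $\De(m,n)$, and — convexity once more — so does every edge between consecutive vertices, hence the whole arc. This proves~(iii).

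The only point that needs care is the affine dependence of the edge-lines of $\Ee(m,n;\de)$ on $\de$, which I expect to be the (mild) main obstacle. It follows from the construction: $\Ee(m,n;\de)$ is obtained from $\C P^2(n-\de_0)$ by the fixed sequence of toric blow-ups of sizes $k_i+\de_i$, and in a toric blow-up the new edge is inserted at lattice distance equal to the blow-up size from the relevant corner while the supporting lines of all earlier edges are unchanged; so every edge-line constant is an affine function of $(\de_0,\dots,\de_i)$. Equivalently, the region bounded by $\Ee(m,n;\de)$ and the two coordinate axes is the Delzant polytope of $X_N$ carrying the symplectic class $a_{m,n;\de}$, normalized so that those two axes are edges, and Delzant polytopes of a fixed toric manifold vary affinely with the cohomology class of the form.
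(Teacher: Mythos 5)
Your proof is correct and follows essentially the same route as the paper's: all three conditions are checked by exploiting the affine dependence of $a_{m,n;\de}$, the affine lengths, and the edge positions on $\de$, interpolating between the admissible configuration at $\de$ and the degenerate one at $\de=0$ described by Lemma~\ref{le:V}. The only cosmetic difference is in condition (iii), where the paper expresses containment in $\De(m,n)$ as a sign condition on the homogeneous linear parts $c_{i1}(\de)$ of the supporting-line constants (preserved under scaling by $t$), while you track the vertices as affine functions of $\de$ and invoke convexity of the triangle directly.
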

\begin{proof} Condition (i) in the definition obviously holds.  To check (ii), let us
denote the edges of $\Ee(m,n;\de)$ by $\ve_i^\de.$
Then, if $0\le i<N,$
$$
\al(\ve_i^\de) = a_{m,n,\de}(h(\ve_i)) = (a_{m,n,\de}-
a_{m,n,0}) (h(\ve_i))
$$
is a homogeneous linear function of $\de$ and hence is positive for $t \de$ if it is positive for $\de$. Further $\al(\ve_N^\de) = 1+$
 a homogenous linear function of $\de$, and so again condition (ii) is satisfied by $t\de$.
 
 To check (iii), observe that the positions of the edges depend linearly on $\de$, i.e. for each $i$ there are constants
 $c_{i0}$ and homogeneous linear functions $c_{i1}(\de)$ such that
 the edge $\ve_i^\de$ lies in the line 
 $$
 \{\bx\in \R^2: \nu(\ve_i)\cdot \bx = c_{i0} + c_{i1}(\de)\}.
 $$
 Moreover, by Lemma~\ref{le:V}, the line $\nu(\ve_i)\cdot \bx = c_{i0}$ goes through
 one of the points $(0,m)$ or $(n,0)$.  Hence (iii) holds iff
 $c_{i1}(\de)\le 0$ for all $i$.  The result follows. 
 \end{proof}

There is an analogous discussion for outer approximations.
These  approximate $\De(m,n)$ by a 
polytope with a concave chain $\Hat\Ee(m,n;\de)$ of edges $\Hat \vareps_i$ lying just outside 
the slanted edge of $\De(m,n)$.  We did the case $(1,k)$ in \S2: 
 $\Hat\Ee(1,4)$ (which in the notation of \S2 corresponds to the chain of spheres $\Hat\Cc_4$) is illustrated in Figure~\ref{fig:3}.
Note that this chain of edges goes between the edges 
 with conormals $(-1,0)$ and $(0,-1)$ but does not include them, so that the classes
$h(\Hat \vareps_i)$ of the edges in $\Hat\Ee$ are linear combinations of the exceptional divisors $\Hat E_i$, with no mention of $L$.
Again we assume that $\Hat\Ee(m,n)$ is minimal, i.e. the last edge $\Hat \vareps_{\Hat N}$ with conormal $(-m,-n)$ is the only edge whose class $h(\Hat \vareps_i)$ has self intersection $-1$.  We shall denote the analog of $V_{m,n}$ by $\Hat V_{m,n}$.  Thus $\Hat V_{m,n}=\sum_{i=1}^{\Hat N} \Hat k_i \Hat E_i $ is such that
\begin{equation}\labell{eq:W}
\Hat V_{m,n}\cdot h(\Hat \vareps_{\Hat N}) = -1,\;\;
\Hat V_{m,n}\cdot h(\Hat \vareps_j) = 0,\; j<\Hat N, \;\; 
\Hat V_{m,n}^2 = -mn.
\end{equation}
For example $\Hat V_{1,k} = \sum_{j=1}^k\Hat E_j$.   We leave the proof of the following statement to the reader.

\begin{lemma} \labell{le:outer} Let $(m,n)$ be relatively prime positive integers with $m<n$.  Then:\MS

\NI {\rm (i)}   For small admissible   $\Hat\de$, $\De(m,n)$ has an  outer approximation 
$\Hat \Ee(m,n;\Hat \de)$ that lies in $(1+\Hat\de_0) \,\De(m,n) \less \ooDe(m,n)$, where $\Hat\de_0>0$.\SSS  

\NI {\rm (ii)}  If $\Hat \de$ is admissible, so is $t\Hat\de$ for all $0<t\le 1$.\SSS

\NI{\rm (iii)} There is a vector $\Hat V_{m,n}$ 
satisfying (\ref{eq:W}).
\end{lemma}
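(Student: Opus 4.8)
The plan is to transcribe, with the obvious sign changes, the inner‑approximation development carried out in Lemmas~\ref{le:ki}, \ref{le:V} and \ref{le:Eede} and in Corollary~\ref{cor:V}; equivalently, as noted after Figure~\ref{fig:6} and in Remark~\ref{rmk:cf}, one may regard an outer approximation of $\De(m,n)$ as an inner approximation of a wedge $\De(n',n')\less\ooDe(m,n)$ and then quote those results after applying a suitable element of $\SL(2,\Z)$ that carries the slanted edge of $\De(m,n)$ to the slanted edge of a genuine triangle. I would prove the three parts in the order (iii), (i), (ii), since the chain $\Hat\Ee(m,n;\Hat\de)$ and its admissibility are controlled by the class $\Hat V_{m,n}$ produced in (iii).

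\emph{Part (iii).} Order the edges $\Hat\vareps_0,\dots,\Hat\vareps_{\Hat N}$ of $\Hat\Ee(m,n)$ by the order in which the blow ups are performed, and set $\Hat S_i=\{\,j>i:\text{the edge }\Hat\vareps_j\text{ intersects }\Hat\vareps_i\,\}$, so that $h(\Hat\vareps_i)=\Hat E_i-\sum_{j\in\Hat S_i}\Hat E_j$ for $i<\Hat N$ and $h(\Hat\vareps_{\Hat N})=\Hat E_{\Hat N}$ (the outer analogue of (\ref{eq:hi})). Define labels recursively by $\Hat k_{\Hat N}=1$ and $\Hat k_i=\sum_{j\in\Hat S_i}\Hat k_j$. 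Then $\Hat V_{m,n}:=\sum_i\Hat k_i\Hat E_i$ automatically satisfies the two linear relations in (\ref{eq:W}), and uniqueness and primitivity hold because the $h(\Hat\vareps_i)$ span the relevant sublattice. For the quadratic identity I would run the Farey induction of Lemma~\ref{le:ki}: if $(m,n),(m',n')$ are the Farey neighbours of $(m'',n'')$ and $\Hat N''=\Hat N'+1$, then $h''(\Hat\vareps_{\Hat N})=h'(\Hat\vareps_{\Hat N})-\Hat E_{\Hat N''}$, $h''(\Hat\vareps_{\Hat N'})=h'(\Hat\vareps_{\Hat N'})-\Hat E_{\Hat N''}$ and $h''=h'$ on all other edges, whence
\[
\Hat V_{m'',n''}=\Hat V_{m,n}+\Hat V_{m',n'}+\Hat E_{\Hat N''},
\]
so $\Hat k_0''=\Hat k_0+\Hat k_0'$ and, using the companion identity $2\,\Hat V_{m,n}\cdot\Hat V_{m',n'}=1-mn'-m'n$ (the sign‑flipped analogue of (\ref{eq:VV})),
\[
(\Hat V_{m'',n''})^2=-mn-m'n'-1+2\,\Hat V_{m,n}\cdot\Hat V_{m',n'}=-(m+m')(n+n')=-m''n''.
\]
The base case ($\Hat V_{1,1}=\Hat E_1$, $\Hat V_{1,1}^2=-1$) is immediate. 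Alternatively, the $\SL(2,\Z)$ reduction of the first paragraph lets one quote Lemma~\ref{le:ki} verbatim.

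\emph{Part (i).} I would build $\De(m,n)$ from the singular blow‑up sequence of Lemma~\ref{le:V}, but perturb each cut \emph{outward}: replace the $i$th cut, of affine size $\Hat k_i$, by one of size $\Hat k_i+\Hat\de_i$ chosen so that a short edge of positive affine length remains after each blow up instead of none, and move the original $(1,1)$‑edge a distance $\Hat\de_0>0$ away from the origin. For small $\Hat\de$ the resulting decomposition into standard triangles exists --- this is Corollary~\ref{cor:V} applied to $(1+\Hat\de_0)\De(m,n)\less\ooDe(m,n)$ in place of $\ooDe(n,n)\less\De(m,n)$ --- and it exhibits a concave polygonal arc $\Hat\Ee(m,n;\Hat\de)$ with the prescribed conormals $\nu(\Hat\vareps_i)$ and affine lengths $\al(\Hat\vareps_i)=a_{\Hat\de}(h(\Hat\vareps_i))>0$, where $a_{\Hat\de}$ is the perturbed cohomology class. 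Concavity (monotone rotation of the conormals) together with the two endpoints of the arc lying just outside the vertices $(n,0)$ and $(0,m)$ forces $\Hat\Ee(m,n;\Hat\de)$ to lie outside $\ooDe(m,n)$; that it lies inside $(1+\Hat\de_0)\De(m,n)$ is a finite check on the vertices of the arc, valid once $\Hat\de$ is small.

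\emph{Part (ii).} This is the exact analogue of Lemma~\ref{le:Eede}: for $i<\Hat N$, $\al(\Hat\vareps_i^{\Hat\de})=(a_{\Hat\de}-a_0)(h(\Hat\vareps_i))$ is a homogeneous linear function of $\Hat\de$, while $\al(\Hat\vareps_{\Hat N}^{\Hat\de})$ is $1$ plus such a function, so positivity of all affine lengths survives $\Hat\de\mapsto t\Hat\de$; each line $\nu(\Hat\vareps_i)\cdot\bx=c_{i0}+c_{i1}(\Hat\de)$ carrying an edge has its $\Hat\de=0$ value passing through $(n,0)$ or $(0,m)$ (Part~(i) with $\Hat\de=0$, i.e.\ Lemma~\ref{le:V}), so the containment condition in the analogue of Definition~\ref{def:allow}(iii) reduces to a sign condition on the homogeneous function $c_{i1}$, which scales linearly; and $\Hat\de_0,\Hat\de_1>0$ are scale‑invariant. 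I expect Part~(i) to be the real obstacle: one must first pin down the \lq\lq outer" analogues of equation~(\ref{eq:hi}) and of Lemma~\ref{le:V}, in particular fixing the orientation conventions for the conormals of $\Hat\Ee(m,n)$ (which run between $(-1,0)$ and $(0,-1)$ rather than between $(0,1)$ and $(1,1)$), and then verify that the bulging arc escapes $\ooDe(m,n)$ while remaining in $(1+\Hat\de_0)\De(m,n)$. None of this is deep, but the sign‑keeping must be set up carefully before the inner‑case arguments transfer.
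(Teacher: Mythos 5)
Your proposal is correct and follows exactly the route the paper intends: the paper explicitly leaves this lemma to the reader, and the intended argument is precisely the sign-flipped transcription of Lemmas~\ref{le:ki}, \ref{le:V} and \ref{le:Eede} (or, equivalently, the $\SL(2,\Z)$ identification of inner and outer approximations recorded in Remark~\ref{rmk:sing}(i)), both of which you carry out; your Farey recursion $\Hat V_{m'',n''}=\Hat V_{m,n}+\Hat V_{m',n'}+\Hat E_{\Hat N''}$ and the companion identity $2\Hat V_{m,n}\cdot\Hat V_{m',n'}=1-mn'-m'n$ check out. The only blemishes are cosmetic inner-case artifacts (the reference to a coefficient $\Hat k_0''$ of $L$, and to a $(1,1)$-edge, neither of which exists for the outer chain, whose role is played instead by the endpoints of $\Hat\Ee$ on the two axes); they do not affect the argument.
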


\begin{rmk}\labell{rmk:cf}\rm
Just as in the case of inner approximations, the conormals occurring in an outer approximation to $\De(m,n)$ give rise to a decomposition of a triangle, which this time is $\De(m,n)$ itself.  Moreover, as we shall prove in \cite{McS}, the sequence of labels
$\Hat k_1,\Hat k_2,\dots, \Hat k_N$ that occur as coefficients in the vector $\Hat V_{m,n}$
can be obtained by the following version of the Euclidean algorithm.  

\begin{quote}{\it
First write down $a_1$ copies of $p_1: = m$ where $a_1m\le n < (a_1+1)m$,
then write down $a_2$ copies of $p_2: = n-a_1p_1$ where $a_2 p_2\le m=p_1 < (a_2+1)p_2$, and so on.  At the $i$th step one writes down
$a_i$ copies of $p_i: = p_{i-2}-a_{i-1}p_{i-1}$ where $a_ip_i\le p_{i-1} < (a_i+1)p_i$.  The process stops as soon as some $p_i=0$.}
\end{quote}

\begin{figure}[htbp] 
   \centering
   \includegraphics[width=3in]{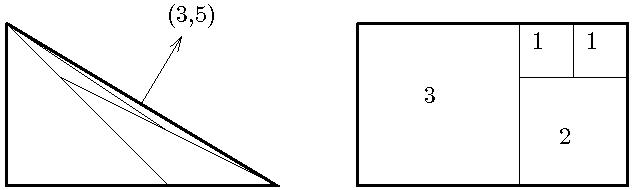} 
   \caption{ $\frac {5}{3}$ has labels $3,2,1,1$ with multiplicities
   $1,1,2$.  Note that in the diagram on the right one starts by expanding horizontally because the rectangle is wider than it is high; at the second step one rotates by $90^o$ and then continues.  This rotation is equivalent to taking the reciprocal of the aspect ratio of the rectangle.  Hence this expansion mirrors the continued fraction.}
   \label{fig:53}
\end{figure}
\NI
As pointed out Dylan Thurston \footnote{Private communication.}  the combinatorics of the resulting decomposition of $\De(m,n)$  are precisely the same as the combinatorics of one of the standard ways of getting
 the continued fraction expansion: see Figure \ref{fig:53}.
 Hence the multiplicities $a_i$ of the labels $\Hat k_j$ give the 
continued fraction expansion of $\frac mn$.   For example,
 $\frac {5}{3}$ has labels $\Hat k_1,\dots,\Hat k_4 = 3,2,1,1$ with multiplicities
   $a_1,a_2,a_3 = 1,1,2$, and 
$$
\frac 53 = 1 + \frac{1}{1+\frac{1}{2}}.
$$
Note also that the multiplicative relation $\Hat V_{m,n}^2 = \sum \Hat k_i^2 =mn$ is obvious from this point of view. Since, as
we show in Theorem \ref{thm:ell2} below, the labels $\Hat k_i$
determine the
weights of the corresponding ball embedding problem, this
multiplicative relation  corresponds to the geometric fact that the total volume of the  balls corresponding to an ellipsoid
$E$ must be the same as the volume of $E$.
\end{rmk}
%

\begin{rmk}\label{rmk:sing} \rm
(i)  We saw above that an inner approximation to $\De(m,n)$ is 
constructed from a decomposition of the
 triangle $T(m,n)=\De(n,n)\less \ooDe(m,n)$ with outward conormals $(1,1), (-1,0), (-m,-n)$, while
an outer approximation to $\De(m,n)$ is constructed from a decomposition of 
$\De(m,n)$ itself.  The matrix
$$
A=\left(\!\!\begin{array}{rr} 0&-1 \\1 &-1 \end{array}\!\!\right)
$$
takes the conormals of $T(m,n)$ to those of $\De(n,n-m)$. It is easy to check that this transformation takes the first decomposition
into the second.  Thus inner and outer approximations are essentially
the same thing, though they are related in a slightly different way to
the ambient triangle $\De(n,n)$.
\SSS

\NI (ii) From the point of view of singularity theory, our construction of
the inner (or outer) approximation to $\De(m,n)$ can be considered as a kind of joint resolution
 of the two singular points of the corresponding toric variety.  Usually, one would resolve them separately, in which case, it is the Hirzebruch-Jung continued fractions (with $-$ rather than $+$  signs) that are relevant: see Fulton \cite[\S2.6]{F}.   In the standard resolution of a single singularity
 one performs the blow ups near just one of the vertices getting  half of our conormals.   For example Fulton's method of resolving the
  vertex with outward conormals $(0,1)$ and $(n,-m)$ (where $0<m<n$)
  begins by a cut with conormal $(1,0)$.  Hence if we rotate his picture 
   anticlockwise by $90^\circ$ we get the
 half of the inner approximation to $\De(m,n)$ near the vertex 
 $(0,m)$; cf. Figure \ref{fig:6}.  Note that the orbifold structure of this vertex has stabilizer of order $n$, with generator $\zeta$ acting on $\C^2$ via $(z_1,z_2)\mapsto (\zeta^{-m} z_1, \zeta z_2)$ 
 where $\zeta=e^{2\pi i/n}$.
 The other half of this inner approximation corresponds to the vertex
 $(n,0)$ which has stabilizer of order $m$ acting via 
 $(z_1,z_2)\mapsto (\eta z_1, \eta^{-n} z_2)$ where $\eta=e^{2\pi i/m}$.
 Hence this corresponds to
Fulton's resolution of the  vertex with outward conormals $(0,1)$ and 
 $(m, km-n)$ where we choose $k$ so that $0<n-km<m$; i.e. we simply interchange the roles of $m$ and $n$.
Note that the interpretation of the coefficients of the continued fraction expansion
 is rather different in the two cases.
\end{rmk}

\subsection{Proof of Theorem~\ref{thm:ell}.}\labell{ss:thm}

We shall prove the following more precise 
form of Theorem \ref{thm:ell}. Recall that $V$ and $\Hat V$ are defined in equations (\ref{eq:V}) and (\ref{eq:W}) respectively.

\begin{thm}\labell{thm:ell2} Suppose that each pair $(m,n)$ and $(m',n')$ is mutually prime, and let 
$$
\Hat V_{m,n} = \sum_{1\le i\le \Hat N} \Hat k_i\Hat E_i,\quad
V_{m',n'} = n'L -\sum_{1\le i\le N}  k_i E_i.
$$
Set
$$
k = N + \Hat N, \quad {\un w}_\la = 
\left(\frac{k_1}{n'},\dots,\frac{k_N}{n'},
\frac{\la \Hat k_1}{n'},\dots,\frac{\la \Hat k_{\Hat N}}{n'}\right).
$$
Then  the question of whether $\la E(m,n)$ embeds into the open ellipsoid $\oE(m',n')$ is equivalent to the symplectic packing problem for $k$ balls with weights ${\un w}_\la $.
\end{thm}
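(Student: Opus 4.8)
The plan is to combine the two one-sided approximation constructions of \S\ref{ss:approx} into a single blow-up picture, just as in the proof of Theorem~\ref{thm:1}, and then apply Proposition~\ref{prop:MP}. First I would fix small admissible $\de$ and $\Hat\de$ and consider the polytope obtained from $\De(n',n')$ by performing the inner-approximation blow-ups $\Ee(m',n';\de)$ near the slanted edge (these have conormals $\nu(\ve_1),\dots,\nu(\ve_N)$ and sizes $k_i+\de_i$), so that the resulting smooth region $R_N$ sits inside $\De(m',n')$ and realizes the open ellipsoid $\oE(m',n')$ as (the interior of) a toric domain in $X_N$; here the line has area $n'-\de_0$. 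Next I would \emph{also} perform, inside this region, the outer-approximation blow-ups $\Hat\Ee(m,n;\Hat\de)$ (scaled by $\la$) along a chain of edges $\Hat C_1,\dots,\Hat C_{\Hat N}$ lying just outside the slanted edge of a copy of $\la\De(m,n)$; by Lemma~\ref{le:outer}(i) these fit inside $R_N$ when $\Hat\de$ is small. The result is a configuration $\Hat\Cc_{m,n}(\la;\Hat\de)$ of symplectic spheres in $X_k$, $k=N+\Hat N$, whose complement (after blowing down the nonsmooth divisor, using Symington's rational-blowdown surgery exactly as in Lemma~\ref{le:1}) contains a disjoint copy of $\la E(m,n)$ inside $\oE(m',n')$.

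The key translation step is to identify the cohomology class of the ambient symplectic form. Exactly as in \S2.2, the blow-up sizes determine the form: the exceptional classes $E_i$, $i\le N$, get area $k_i+\de_i$ and the classes $\Hat E_j$, $j\le\Hat N$, get area $\la\Hat k_j+\la\Hat\de_j$, while $L$ has area $n'-\de_0$. Passing to the limit $\de,\Hat\de\to 0$ (which by Lemmas~\ref{le:Eede} and~\ref{le:outer}(ii) can be done continuously, and by Corollary~\ref{cor:conn} — or directly, as in the proof of Theorem~\ref{thm:1}, using inflation along the curve Poincar\'e dual to a suitable multiple of the relevant class — does not affect embeddability), the relevant cohomology class in $X_k$ is the Poincar\'e dual of
$$
n'L - \sum_{i=1}^{N} k_i E_i - \la\sum_{j=1}^{\Hat N}\Hat k_j \Hat E_j,
$$
which after rescaling by $1/n'$ is precisely $a_{\un w_\la} = \ell - \sum w_i e_i$ with $\un w_\la$ as in the statement. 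One must check that this blow-down produces a copy of $\C P^2(n')$: since $V_{m',n'}^2 = m'n'$ and $\Hat V_{m,n}^2 = -mn$, the self-intersection count works out, and the diffeomorphism type is pinned down by Gromov's uniqueness of symplectic structures on rational surfaces, exactly as in Lemma~\ref{le:1}. Conversely, an embedding of the $k$ balls with weights $\un w_\la$ into $\oB(1)$ gives, via Proposition~\ref{prop:MP}, a symplectic form on $X_k$ in class $\Cc_K$, which we inflate and then blow down (reversing the surgery) to produce the ellipsoid embedding $\la E(m,n)\se\oE(m',n')$.

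The main obstacle I expect is the careful bookkeeping needed to verify that the two chains of edges — the inner one attached to the slanted edge of $\De(m',n')$ and the outer one wrapped around the slanted edge of $\la\De(m,n)$ inside it — can be made \emph{simultaneously} admissible and disjoint, with all edge lengths positive, and that the resulting spheres meet with the correct intersection pattern (a linear chain) and correct self-intersections ($-2$ except for the two ends, which are $-1$ and together get absorbed correctly in the blow-down). This is where Definition~\ref{def:allow} and Lemmas~\ref{le:Eede}, \ref{le:outer} do the real work, together with the observation (Remark~\ref{rmk:sing}(i)) that the outer approximation to $\De(m,n)$ is just the inner approximation to $\De(n,n-m)$ transported by the matrix $A$, so the combinatorics of the combined picture are governed by the two continued-fraction expansions of $m/n$ and $m'/n'$. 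Once the configuration is set up correctly, the rest — the neighborhood extension via Lemma~\ref{le:nb}, the rational blow-down via Symington, the inflation argument, and the identification of the class with $a_{\un w_\la}$ — runs in close parallel to the proof of Theorem~\ref{thm:1} and Proposition~\ref{prop:d}.
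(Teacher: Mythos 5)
Your overall strategy --- inner approximation of the target, outer approximation of the source, Symington blow-down, inflation --- is exactly the paper's. But there are two places where the argument as written has a genuine gap. The main one is in the direction (packing $\Rightarrow$ embedding): you say the scaled outer chain ``fits inside $R_N$ when $\Hat\de$ is small,'' and later that the symplectic form supplied by Proposition~\ref{prop:MP} is ``inflated and then blown down.'' Neither step actually produces the configuration $\Cc_\de\sqcup\la\Hat\Cc_\de$ for the given $\la$: the toric picture only contains $\la\De(m,n)$ inside the approximation of $\De(m',n')$ when $\la$ is small (roughly $\la<m'/n$), and the abstract form in class $a_{{\un w}_\la}$ coming from the packing carries no distinguished chain of spheres to blow down. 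The paper's resolution is to build $\Cc_\de\sqcup r\Hat\Cc_\de$ only for a small $r<m'/n$, where a linear embedding exists, giving a form $\Om_{r,\de}$; the packing hypothesis is used solely to certify that $PD(qa_\la)$ has nontrivial Gromov invariant and that $a_\la(E)>0$ for all $E\in\Ee_K(X_k)$, whence for $J$ generic among those making both chains holomorphic there is an embedded curve $Q$ in class $PD(qa_\la)$ meeting the chains correctly, and one inflates $\Om_{r,\de}$ along $Q$ so that the $\Hat\Cc$-factor grows from $r$ to any $\la_0<\la$. Your phrase ``runs in close parallel to Theorem~\ref{thm:1}'' points at this, but the explicit logic you give (inflate the packing form, then blow down) is inverted and does not supply the chain.

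The second gap is the identification of the blown-down complement. Because the target is an ellipsoid rather than a ball, one cannot complete to a closed manifold and invoke Gromov's uniqueness for $\C P^2$ ``exactly as in Lemma~\ref{le:1}'': after removing the inner chain $\Cc_\de$ one has an open manifold standard at infinity, and the paper shows that blowing down $\la_0\Hat\Cc_\de$ yields an open set $(Z,\om)$ with $H_2(Z)=0$ agreeing with $U_\de$ near infinity, then applies the uniqueness of symplectic forms on star-shaped domains standard near the boundary (\cite[Thm.~9.4.2]{MS}) to get $(Z,\om)\cong U_\de\subset \oE(m',n')$. Finally, a smaller point: the forward direction (embedding $\Rightarrow$ packing) is handled in the paper by the elementary containment argument --- Corollary~\ref{cor:V} puts open balls of sizes $k_i$ in the complement of $\oE(m',n')$ in $\C P^2(n')$ and $\la E(m,n)$ contains open balls of sizes $\la\Hat k_j$, with a slight enlargement $(1+\ka)$ converting open balls to the required closed ones --- whereas your route through the class $n'a_{{\un w}_\la}+c(\de)$ needs extra care because the components of $\de$ may have either sign, so the balls you produce may be slightly smaller than the closed balls $B(w_i)$ demanded by the packing problem.
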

\begin{proof}  Suppose first that $E(\la m,\la n)$ embeds into the open ellipsoid $\oE(m',n')$.  Since $\oE(m',n')\subset \oB(n')$ we may consider $\oE(m',n')$ as a subset of
$\C P^2(n')$.  By Corollary~\ref{cor:V} the complement of
$\oE(m',n')$ in  $\C P^2(n')$ contains the $N$ open balls $\oB(k_i)$ (cf. Lemma \ref{le:open}.)  Moreover these balls
can be embedded disjointly from the line in $\C P^2(n')$ represented by the edge of $\De(n',n')$ with conormal $(1,1)$.
Similarly, $\la E(m,n)$ 
contains the $\Hat N$ open balls $\la \oB(\Hat k_j)$.  Hence, rescaling by $1/n'$, we see that
the given symplectic packing problem has a solution with open balls.

However, the problem was formulated in terms of embedding closed balls.  To deal with this, observe that when $\la E(m,n)\se \oE(m',n')$ there is  $\ka>0$ such that
 $(1+\ka)\la E(m,n)$ embeds in $\frac 1{1+\ka}\oE(m',n')$.
 This means that the sizes of  all the open balls
can be slightly increased so that they contain
 closed balls of the correct size.

Conversely, suppose that the ball packing problem has a solution, i.e. that there is a symplectic form on the $k$-fold blow up $X_k$  of $\C P^2$ in class 
\begin{equation}\labell{eq:ala}
a_\la: = \ell - \sum_{i=1}^k w_ie_i. 
\end{equation}
Since the 
space of symplectic forms is open, we may suppose 
without loss of generality that $\la$ is rational.  Moreover, it suffices to prove that 
\begin{equation}\labell{eq:la0}
\la_0\,E(m,n)\;\se\; \oE(m',n')\;\;\mbox{ for all }\la_0<\la.
\end{equation}

Before proceeding further, it is convenient to introduce some notation.
We will denote the divisors of $X_k$ by $E_1,\dots, E_k$ as usual; hence the $E_{N+i}, 1\le i\le \Hat N$, correspond to the exceptional divisors $\Hat E_i$ associated to $E(m,n)$.
Further $\de$ will denote a tuple of small constants, whose length 
(either $N+1,\Hat N$ or $k+1=N+1+\Hat N$) will depend on the context.
When it is necessary to be more specific we shall denote the $(k+1)$-tuple $\de$ by $(\de',\Hat\de)$. Further
 $\de = (\de',\Hat\de)$ is admissible if its first $N+1$ components $\de'$ are admissible  for $E(m',n')$ while its last $\Hat N$ components $\Hat\de$ are admissible  for $E(m,n)$.

Given an inner approximation $\Ee(m',n';\de)$, we shall
denote by $U_\de$ the $T^2$ invariant open subset of $\CP^2(n')$ whose moment image is  the component of $\De(n',n')\less \Ee(m',n';\de)$ that lies in $\De(m',n')$.
Thus $U_\de\subset \oE(m',n')$ is a smooth approximation to $E(m',n')$.
We shall denote the chain of spheres corresponding to 
an inner approximation $\Ee(m',n';\de)$ by $\Cc_\de$ and that corresponding to an outer approximation $\Hat \Ee(m,n;\de)$ by $\Hat\Cc_\de$.  

Finally if $U$ is any subset of a symplectic manifold  $(M,\om)$ and $r>0$ we shall denote by $r U$ the set $U$ provided with the form $r\om|_U$. Note that if $\Om$ is a symplectic form on $X_k$ such that the disjoint union
$\Cc_\de\sqcup \la \Hat\Cc_\de$ embeds in $(X_k,\Om)$, then the class $[\Om]$ is determined by $\de$ and is close to $n(\ell-\sum w_ie_i)$.
 \MS

\NI {\bf Claim 1:}  {\it If there is a symplectic form $\Om$ on $X_k$ such that
$
\Cc_\de\sqcup \la_0 {\Hat \Cc}_\de$ embeds in $(X_k,\Om)$ for some admissible $\de$, then
 $\la_0E(m,n)\se \oE(m',n')$.}\MS

\NI {\it Proof.}  Let $\Nn(\Cc_\de)$ be a $T^2$-invariant 
neighborhood of 
$\Cc_\de$ whose moment image is a neighborhood of 
$\Ee(m',n';\de)$.  As in Lemma \ref{le:nb} we may suppose that 
$\Nn(\Cc_\de)$ embeds in $(X_k,\Om)$.  Then a neighborhood 
of infinity in $(W, \Om): = (X_k\less \Cc_\de,\Om)$ may be identified with 
  a neighborhood 
of infinity in $U_\de$, where $U_\de$ is as above. 
(In fact, $(X_k\less \Cc_\de,\Om)$ can be obtained from $U_\de$ by further blowing up near the inverse image of $(0,0)$.)   Moreover $(W,\Om)$ contains a copy of $\la_0 \Hat\Cc_\de$ and hence, as in Lemma~\ref{le:1}, blows down to an open set $(Z,\om)$ containing $\la_0E(m,n)$. But $H_2(Z) = 0$ by construction, and $(Z,\om)$ is symplectomorphic to
$U_\de$ at infinity.  Hence, by the uniqueness of symplectic forms on starshaped subsets of $\R^4$ that are standard near the boundary
(see \cite[Thm.~9.4.2]{MS}), $(Z,\om)$ is symplectomorphic to $U_\de$.
Therefore
$$
\la_0E(m,n)\;\se\; (Z,\om)\;\cong\; U_\de\;\subset\; E(m',n'),
$$ 
which proves the claim.\QED

\NI {\bf Claim 2:}  {\it If the ball packing problem has a solution with weights ${\un w}_\la$, then for all $\la_0<\la$, there is a symplectic form $\Om$ on $X_k$ such that
$
\Cc_\de\sqcup \la_0 {\Hat \Cc}_\de$ embeds in $(X_k,\Om)$ for some admissible $\de$.}\MS

\NI {\it Proof.}\,\,
If $r< \frac{m'}n$ then $r E(m,n)$ embeds linearly in
$\oE(m',n')$ and so, for small admissible $\de$ there is 
a symplectic form $\Om_{r,\de}$ on $X_k$ such that
$
\Cc_\de\sqcup r {\Hat \Cc}_\de$ embeds in $(X_k,\Om_{r,\de})$.
Note that $[\Om_{r,\de}] = a' + r \,\Hat a + c(\de)$, where 
$$
a': = n'\ell - \sum_{1\le i\le N}w_ie_i,\quad \Hat a: =  -
\frac1 {\la}\,\sum_{N< i\le k}{w_i}e_i,
$$
and  $c(\de) = c'(\de') + \Hat c(\Hat\de)\in H^2(X_k)$ is a homogeneous linear function of $\de$.

By assumption the class $a_\la:=n'\ell- \sum w_ie_i = a'+\la\Hat a$ 
of equation (\ref{eq:ala}) is rational and represented by a symplectic form.
 Therefore, as in the proof of Proposition~\ref{prop:M}, the homology class $PD(qa_\la)$ has nontrivial Gromov invariant for large $q$. Choose an $\Om_{r,\de}$ tame almost complex structure $J$ on $X_k$ such that  both $\Cc_\de$ and $ r{\Hat \Cc}_\de$ are $J$-holomorphic.  If $J$ is sufficiently generic, then as in the proof of Theorem~\ref{thm:1} in \S2 we may suppose that 
the class $PD(qa_\la)$ is represented by a connected $J$-holomorphic submanifold $Q$ that intersects $\Cc_\de$ and $ r {\Hat \Cc}_\de$
transversally.  The inflation procedure gives a family of symplectic forms $\Om_t$ on $X_k$ that are nondegenerate on the two  configurations of spheres and lie in class 
\begin{eqnarray*}
[\Om_t] &=&[\Om_{r,\de}] + tqa_\la\\
&=&  (1+tq)\left(a'+ \bigl(\frac{ r + \la tq}{1+tq}\bigr)\,\Hat a+c'(\ka')+\Hat c(\Hat\ka)\right) 
\end{eqnarray*}
where $\ka'$ and $\Hat \ka$ are  multiples of $\de'$ and $ \Hat \de$. 
and so  are admissible by Lemmas~\ref{le:Eede} and \ref{le:outer}.  
Observe that as $t\to \infty$ the class $\frac 1{1+qt}[\Om_t]$ 
converges to  $a_\la$.  Moreover, for appropriate $\de$, $\Cc_\de$ embeds in $(X_k,\Om_t)$ for all $t$, while $\la_0 \Hat\Cc_\de$ embeds in $(X_k,\Om_t)$ if $\la_0= 
\frac{ r + \la tq}{1+tq}$.  By equation (\ref{eq:la0}) this completes the proof.
\end{proof}

\NI {\bf Proof of Corollary \ref{cor:conn}.} Since the targets of the embeddings are open ellipsoids, an easy continuity argument implies that it suffices to prove these statements when $a,b,a',b'$ are integers.
 Part  (i) is equivalent to saying that 
all deformation equivalent symplectic forms on $X_k\less \Nn(\Cc_\de\cup\Hat \Cc_\de)$ are isotopic.  It can be proved in the same way as the uniqueness of symplectic forms on $X_k$. One just needs to inflate along curves $Q$ that intersect $\Cc_\de$ and $\Hat\Cc_\de$ transversally, which is possible as in the proof of Claim 2 above.  For more details, see \cite{M}.

(ii) follows from (i) just as the analogous statement for balls follows from the fact that the space of embeddings of one ball into another is connected.  Let $\la_n, n\ge1,$ be an increasing sequence with limit $\la$.  From a sequence of embeddings
$$
\io_n: \la_n\,E(m,n)\;\se\; \oE(m',n')
$$
one first uses (i) with target ellipsoid $ \oE(m',n')$ to construct a sequence $\io_n'$ such that ${\rm Im}\,\io_n'\subset {\rm Im}\,\io_{n+1}'$.  By using (i) again, this time with
target ellipsoids ${\rm Im\,}\io_{n+1}'$, one makes
 a further adjustment so that $\io_{n+1}'$ restricts to $\io_n'$ on 
$\la_n\,E(m,n)$.  The result follows.
\QED

\NI {\bf Proof of Proposition \ref{prop:tol} (i).}
We saw above that $V_{2,3} = 3L - F_1-F_2-F_3$ and $\Hat V_{1,4} = \Hat E_1+\Hat E_2+\Hat E_3+\Hat E_4$.  Hence the first statement follows from Theorem~\ref{thm:ell2}.

For simplicity, let us rename the blow up classes in $X_7$ as $E_i, i=1,\dots, 7,$ where $F_i: = E_{i}$ for $1\le i\le 3$ and $E_i: = \Hat E_{i-3}$ for $4\le i\le 7$. By Propositions~\ref{prop:MP} 
and \ref{prop:M}, the second statement will follow if we show that
the class 
$$
a_{\un w} = \ell-\frac 13 e_{123} - \frac \la 3 e_{4567}
$$
takes positive values on all the elements in $\Ee_K(X_7)$, where 
$e_{j\dots k}: = \sum_{i=j}^ke_i$.  But $\Ee_K(X_7)$ 
is generated by classes of the form
 $E_i$, $L-E_i-E_j$ together with classes that are equivalent to the following (after permutation of indices):
 $$
{\rm (i)}\quad 2L-E_{1\dots 5},\qquad {\rm (ii)}\quad
 3L-2E_1-E_{2\dots 7}.
 $$
Evaluating $a_{\un w}$ on $ 3L-2E_7-E_{1\dots 6}$ we find that
we need 
$3>\frac 53\la +1$ i.e. $\la <\frac 65.$  Since the other curves in $\Ee_K(X_7)
$ give weaker inequalities, the result follows.\QED

\NI {\bf Proof of Proposition \ref{prop:tol} (ii).}
The first statement holds as before.  To prove the second, recall
 that $\Ee_K(X_8)$ is generated by the classes  in $\Ee_K(X_7)$ 
together with those of the following three forms:
$$
\begin{array}{ll}
{\rm (iii)}& 4L - 2E_{123} - E_{4\dots 8};\\
{\rm (iv)} & 5L - 2E_{1\dots 6} - E_{78};\\
{\rm (v)} & 6L - 3E_1 - 2E_{2\dots8}.
\end{array}
$$
(These structural results on $\Ee_K(X_7)$ and 
$\Ee_K(X_8)$ are classical and not hard to prove directly from the definition.)
One gets the sharpest inequality on $\la$ from 
elements of the form (v), which give 
$\la< \frac {12}{11}$.  Hence the result.\QED

\end{document}